\documentclass[10pt]{amsart}
\usepackage{amsmath}
\usepackage{enumerate}
\usepackage[colorlinks=true]{hyperref}
\usepackage{cleveref}

\usepackage{esint}
\newtheorem{theorem}{Theorem}[section]
\newtheorem{corollary}[theorem]{Corollary}
\newtheorem{lemma}[theorem]{Lemma}
\newtheorem{proposition}[theorem]{Proposition}

\theoremstyle{definition}
\newtheorem{definition}[theorem]{Definition}
\theoremstyle{remark}
\newtheorem{remark}{Remark}
\numberwithin{equation}{section}

\def\D{\nabla}
\newcommand{\abs}[1]{\left\vert#1\right\vert}
\newcommand{\RR}{\mathbb{R}}

\newcommand{\NN}{\mathbb{N}}

\newcommand{\XX}{W^{1,k+1}(B_1,\abs{x}^{1-k})}
\newcommand{\norm}[1]{\left\Vert#1\right\Vert}
\newcommand{\dive}{{\rm div}}
\newcommand{\Q}{\mathcal{Q}}

\usepackage[notref,notcite]{showkeys}

\begin{document}
\title[A characterization of semistable radial solutions of $k$-{H}essian]{A characterization of semistable radial solutions of $k$-{H}essian equations}
\author{Miguel Angel Navarro and Justino S\'anchez}
\address{Departamento de Estat\'{i}stica, An\'alise Matem\'atica e Optimizaci\'on\\Universidade de Santiago de Compostela\\Santiago de Compostela 15782, Spain.}
\email{miguelangel.burgos@usc.es }
\address{Departamento de Matem\'{a}ticas\\
	Universidad de La Serena\\
	Avenida Cisternas 1200, La Serena, Chile.}	
\email{jsanchez@userena.cl}	
\maketitle	
\begin{abstract}
We characterize semistable radial solutions of the equation
\begin{equation*}
S_k\left(D^2u\right)=g(u)\;\; \mbox{in }\;B_1,
\end{equation*}
where $B_1$ is the unit ball of $\RR^n$, $D^2u$ is the Hessian matrix of $u,\,g$ is a positive $C^1$ nonlinearity and $S_k\left(D^2u\right)$ denotes the $k$-Hessian operator of $u$. This class of radial solutions has been recently introduced by the authors in \cite{NaSa19}. The proofs are new relative to those given in \cite{NaSa19} and focus on the structure of the equation directly, thereby improving some previous results.
\end{abstract}

2010 Mathematics Subject Classification 35J60, 35J25 (primary), 35B35, 35B07 (secondary).

\section{Introduction and statement of results}
In this work we are concerned with the following nonlinear equation
\begin{equation}\label{Eq:f}
S_k\left(D^2u\right)=g(u)\;\; \mbox{in }\;B_1.
\end{equation}

Here $B_1$ is the unit ball of $\RR^n$, $D^2u$ is the Hessian matrix of $u,\,g$ is a positive $C^1$ nonlinearity and $S_k\left(D^2u\right)$ denotes the $k$-Hessian operator of $u$.

In our previous paper \cite{NaSa19} we introduced the notion of semistability for solutions of equation \eqref{Eq:f} under a homogeneous Dirichlet boundary condition. In the radial case, sharp pointwise estimates on semistable solutions were obtained, extending some results from the semilinear case, i.e., when $k=1$. However, to work with the new notion of stability for solutions of \eqref{Eq:f} even under rotational symmetry conditions it was necessary to introduce the auxiliary equation
\begin{equation}
\dive\left(\abs{x}^{1-k}\abs{\D u}^{k-1}\D u\right)=c_{n,k}^{-1}\,g(u)\mbox{ in }B_1\setminus\{0\},
\label{Eq:P}
\refstepcounter{equation}
\tag{$P$}
\end{equation}
($c_{n,k}=\binom{n}{k}/n$) and exploit the fact that, for radial solutions, both equations coincide. The main purpose of this paper is to characterize the class of radially symmetric solutions which are semistable, in a suitably-defined sense, for equation \eqref{Eq:f}. For this, we use new arguments based on the radial structure of \eqref{Eq:f} in order to remove equation \eqref{Eq:P} in \cite{NaSa19}. 

For $k\in\{1,...,n\}$, let $\sigma_k:\RR^n\rightarrow \RR$ denote the $k$-th elementary symmetric function
\[
\sigma_k\left(\lambda\right)=\sum_{1\leq i_1<...<i_k\leq n}{\lambda_{i_1}\cdots\lambda_{i_k}},
\]
and let $\Gamma_k$ denote the cone $\Gamma_k=\{\lambda=(\lambda_1,...,\lambda_n): \sigma_1(\lambda)>0,...,\sigma_k(\lambda)>0\}$. For a twice differentiable function $u$ defined on a smooth domain $\Omega\subset\RR^n$, the {\it $k$-Hessian operator} is defined by  
$
S_k\left(D^2u\right)=\sigma_k\left(\lambda\left(D^2u\right)\right),
$
where $\lambda\left(D^2u\right)$ are the eigenvalues of $D^2u$. Equivalently, $S_k\left(D^2u\right)$ is the sum of the $k$-th principal minors of the Hessian matrix. See e.g. \cite{Wang09, Wang94}. Two relevant examples in this family of operators are the Laplace operator $S_1\left(D^2u\right)=\Delta u$ and the Monge-Amp\`{e}re operator $S_n\left(D^2u\right)=\mbox{det}\left(D^2u\right)$. They are fully nonlinear when $k\geq 2$. In particular,  $S_2\left(D^2u\right)=\frac{1}{2}\left((\Delta u)^2-\abs{D^2 u}^2\right)$. The study of $k$-Hessian equations have many applications in geometry, optimization theory and other related fields. See \cite{Wang09}. These operators have been studied extensively, starting with the seminal work of Caffarelli, Nirenberg and Spruck \cite{CaNS85}. See, e.g., \cite{ChWa01, Jacobsen99, Jacobsen04, JaSc02, Tso89, Tso90}. 

Now consider the class of functions 
\[
\Phi^k\left(\Omega\right)=\left\lbrace u\in C^2\left(\Omega\right)\cap C\left(\overline{\Omega}\right): \lambda\left(D^2u\right)\in\Gamma_i,\, i=1,\ldots,k\right\rbrace .
\]

The functions in $\Phi^k\left(\Omega\right)$ are called {\it admissible} or $k$-{\it convex functions}. Further, $S_k\left(D^2u\right)$ turns to be elliptic in the class of $k$-convex functions. Denote by $\Phi_0^k\left(\Omega\right)$ the set of functions in $\Phi^k\left(\Omega\right)$ that vanish on the boundary $\partial\Omega$. Observe that the functions in $\Phi_0^k\left(\Omega\right)$ are negative in $\Omega$. For more details we refer the reader to \cite{Wang09}.
 
The following two notions of solutions for problem \eqref{Eq:f} were introduced recently in \cite{NaSa19}
\begin{definition}\label{def:solutiongen}
	We say that:
	\begin{enumerate}[$i)$]
		\item $u$ is a {\it classical solution} of \eqref{Eq:f} if $u\in \Phi_0^k(B_1)$ and equation \eqref{Eq:f} holds;
		\item $u$ is a {\it weak solution} of \eqref{Eq:f} if $u\in L^{k+1}(B_1)$, $\int_{B_1}{\left\lbrace \sum{u_iu_jS_k^{ij}(D^2u)}\right\rbrace}<\infty$, $g(u)\in L^1(B_1)$ and
		
		\begin{equation*}
		\int_{B_1}{\left\lbrace\frac{1}{k}\sum{u_i\eta_jS_k^{ij}(D^2u)}+g(u)\eta\right\rbrace }=0,\,\forall \eta\in C_c^{1}(B_1),
		\end{equation*}
		where $u_i=u_{x_i}$ indicates the partial derivative of $u$ with respect to the variable $x_i$.
	\end{enumerate}
	
 We recall that $S_k^{ij}(D^2u)=\frac{\partial}{\partial u_{ij}}S_k(D^2 u)$, where $u_{ij}=u_{x_ix_j}$. This expression is related to the divegence structure of the $k$-Hessians, $S_k(D^2 u)=\frac{1}{k}\sum (u_j S_k^{ij}(D^2u))_i$. For instance, when $k=1$, we have $S_1^{ij}(D^2u)=\delta_{ij}$ and $S_1(D^2 u)=\sum\delta_{ij}u_{ij}$, where $\delta_{ij}$ is the Kronecker delta symbol.
\end{definition}

\begin{definition}\label{def:semistablegen} 
	Let $u$ be a solution of \eqref{Eq:f}. We say that $u$ is {\it semistable} if
	\begin{equation}\label{eq:matchalQgen}
	\mathcal{Q}_u(\varphi)=\int_{B_1}{\left\lbrace\sum{\varphi_i\varphi_jS_k^{ij}(D^2u)}+g'(u)\varphi^2\right\rbrace }\geq 0,\,\forall \varphi\in C_c^1\left(B_1\setminus\{0\}\right).
	\end{equation}
\end{definition}

From a variational point of view, semistable solutions of equation \eqref{Eq:f} in $\Phi_0^k\left(B_1\right)$ correspond to critical points of an energy functional with nonnegative second variation \eqref{eq:matchalQgen} (in particular, local minimizers of the energy are semistable solutions). See \cite{Tso90, Wang94}.

Recently, in \cite{WaLe19}, the authors gave a definition of (classical) stable radial solutions of the $k$-Hessian equation 
$F_{k}(D^2V)=(-V)^p$ in $\RR^n$. They stablished connections between certain critical exponents of Joseph-Lundgren type and stability. We point out that our definition of semistability was motivated by the variational structure of equation in \eqref{Eq:f} and the fact that the $k$-Hessians can be written in divergence form. Note that our semistability condition \eqref{eq:matchalQgen} agrees (with the obvious changes) with the one given in \cite{WaLe19} if $u$ is radial. See \eqref{ineq:propstable} below. Furthermore, we obtain explicitly $S_k^{ij}(D^2u)$ in terms of the eigenvalues of the Hessian matrix of $u$, which is key for characterizing the semistable solutions (classical or weak solutions).

We recall that, for a radially symmetric $C^2$ function $u$, the $k$-Hessian operator is given by
\begin{equation}\label{Radial:Hess}
S_k(D^2 u)=c_{n,k}r^{1-n}\left (r^{n-k}(u')^k\right )'=c_{n,k}\left(\frac{u'}{r}\right)^{k-1}\left(n\left(\frac{u'}{r}\right)+k\left(u''-\frac{u'}{r}\right)\right),
\end{equation}
where $u(x)=u(r),\, r=\abs{x}, x\in\RR^n\setminus\{0\}$ and $c_{n,k}=\binom{n}{k}/n$. Here $u'$ denote the radial derivative of the radial function $u$. This formula is well known. For self-containment, we include a proof of \eqref{Radial:Hess} in Section 5 (the reader who is familiar with \eqref{Radial:Hess} may certainly skip this proof). 

\begin{remark}\label{rema:equiv}
Using the sign condition on $g$, it is easy to see that the following statements are equivalent: $(a)$ $u$ is a classical radial solution of \eqref{Eq:f}; $(b)$ $u$ is a $C^2$ solution of $c_{n,k}r^{1-n}(r^{n-k}(u')^k)'=g(u),\, r\in (0,1)$ satisfying $u'(0)=u(1)=0$. In particular, for a classical radial solution $u$ of \eqref{Eq:f}, the above equivalence implies that $u'(r)>0$ for all $r\in (0,1)$. 
\end{remark}

Our main results are the following two theorems which characterize weak and semistable radial solutions of \eqref{Eq:f}

\begin{theorem}\label{theo:solweak}
	Let $g(u)\in L^1(B_1)$. A function $u\in\XX$ is a weak radial solution of \eqref{Eq:f} if and only if
	
	\begin{equation*}
	\int_{B_1}{\left\lbrace c_{n,k}\abs{x}^{-k}(u')^{k}\left(x,\D \xi\right)+g(u)\xi\right\rbrace }=0,
	\end{equation*}
	for every radially symmetric function $\xi\in C_c^{1}(B_1)$.	
\end{theorem}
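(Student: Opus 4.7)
The plan is to derive, for every radial $u$ and every $\eta\in C_c^{1}(B_1)$, the pointwise identity
\begin{equation*}
\frac{1}{k}\sum_{i,j} u_i\eta_j S_k^{ij}(D^2u) \;=\; c_{n,k}\abs{x}^{-k}(u')^{k}(x,\D\eta),
\end{equation*}
from which both directions of the theorem will fall out essentially for free. To prove the identity, I would first note that the Hessian of a radial $u$ has eigenvalue $u''$ along the radial direction $x/r$ and eigenvalue $u'/r$ with multiplicity $n-1$ on the tangential subspace, so $(S_k^{ij}(D^2u))$ is co-diagonalised with $D^2u$ and admits the decomposition
\begin{equation*}
S_k^{ij}(D^2u) \;=\; A(r)\,\frac{x_ix_j}{r^{2}} + B(r)\!\left(\delta_{ij}-\frac{x_ix_j}{r^{2}}\right),
\end{equation*}
with $A(r)=\partial_{\lambda_{\text{rad}}}\sigma_k=\binom{n-1}{k-1}(u'/r)^{k-1}$ and $B(r)$ the analogous tangential derivative (whose precise form turns out to be irrelevant). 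Inserting $u_i=u'(r)x_i/r$ and using $\sum_j x_j\eta_j=(x,\D\eta)=r\eta_r$, the $B$-piece cancels between the two terms, leaving $A(r)u'(r)\eta_r$; the pointwise identity then follows from $\binom{n-1}{k-1}/k=c_{n,k}$.

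With this identity established, the ``only if'' direction is immediate: testing a weak radial solution against any radial $\xi\in C_c^{1}(B_1)$ via Definition~\ref{def:solutiongen}(ii) and applying the identity yields exactly the claimed integral equation. For the converse, I would take an arbitrary $\eta\in C_c^{1}(B_1)$ and introduce its spherical average $\tilde\eta(r):=\abs{S^{n-1}}^{-1}\int_{S^{n-1}}\eta(r\omega)\,d\omega$, which is radial and lies in $C_c^{1}(B_1)$. Because $u$ is radial, Fubini gives $\int_{B_1}g(u)\eta=\int_{B_1}g(u)\tilde\eta$, and differentiating under the integral, $\int_{S^{n-1}}\eta_r(r\omega)\,d\omega=\abs{S^{n-1}}\tilde\eta'(r)$, yields $\int_{B_1}\abs{x}^{-k}(u')^{k}(x,\D\eta)=\int_{B_1}\abs{x}^{-k}(u')^{k}(x,\D\tilde\eta)$. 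The hypothesis applied with $\xi=\tilde\eta$ then forces these two integrals to sum to zero, which by the pointwise identity is precisely Definition~\ref{def:solutiongen}(ii) for $\eta$.

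The decisive step is the spectral computation: establishing the co-diagonalisation of $(S_k^{ij})$ with $D^2u$, identifying the radial eigenvalue $A(r)$ explicitly, and carrying out the contraction so as to see the tangential coefficient $B(r)$ drop out. The weighted integrability encoded in $u\in\XX$ matches precisely the condition $\int\sum u_iu_jS_k^{ij}(D^2u)<\infty$ from Definition~\ref{def:solutiongen}(ii) for radial $u$ (both reduce to $\int_0^1 r^{n-k}(u')^{k+1}dr<\infty$), so no additional approximation beyond the spherical averaging above is needed.
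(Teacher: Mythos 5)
Your proposal is correct and follows essentially the same route as the paper: the paper's Section 2 derives exactly your decomposition $S=A(r)\frac{x^Tx}{r^2}+B(r)\bigl(I_n-\frac{x^Tx}{r^2}\bigr)$ (equations \eqref{eq:Sijgenmatrix}--\eqref{eq:wSvT}, with $A=kc_{n,k}\lambda_2^{k-1}=\binom{n-1}{k-1}(u'/r)^{k-1}$), contracts with $u_i=\lambda_2 x_i$ so that the tangential coefficient drops out, and handles the converse by the same spherical-averaging argument. The only cosmetic difference is that you read off $A(r)$ directly from the spectral identity $S_k^{ii}=\sigma_{k-1}(\lambda\,|\,i)$ in the eigenbasis, whereas the paper computes $S_k^{ij}$ via the chain rule through $\partial\lambda_1/\partial u_{ij}$ and $\partial\lambda_2/\partial u_{ij}$.
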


\begin{theorem}\label{theo:esential}
	A function $u$ is a semistable radial solution of \eqref{Eq:f} if and only if
	
	\begin{equation}\label{ineq:propstable}
	\Q_u(\xi):=\int_{B_1}{kc_{n,k}\abs{x}^{1-k}(u')^{k-1}\abs{\nabla\xi}^2+g'(u)\xi^2}\geq 0,
	\end{equation}
	for every radially symmetric function $\xi\in C_c^{1}(B_1)$.
\end{theorem}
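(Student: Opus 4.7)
The plan is to reduce semistability on all test functions in $C_c^1(B_1\setminus\{0\})$ to the radial inequality \eqref{ineq:propstable} by exploiting the explicit structure of the cofactor matrix $S_k^{ij}(D^2u)$ when $u$ is radial.

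First, I would compute $S_k^{ij}(D^2u)$ explicitly. Since $u$ is radial, $D^2u$ has eigenvalue $u''(r)$ in the direction $x/r$ and eigenvalue $u'(r)/r$ with multiplicity $n-1$ in the tangent directions, and $S_k^{ij}=\partial\sigma_k/\partial u_{ij}$ is diagonal in the same frame. Using that $\partial\sigma_k/\partial\lambda_i$ is the $(k-1)$-th elementary symmetric polynomial in the remaining $n-1$ variables, the radial eigenvalue comes out to $\alpha(r):=\binom{n-1}{k-1}(u'/r)^{k-1}$ and the tangential eigenvalue to $\beta(r):=\binom{n-2}{k-2}u''(u'/r)^{k-2}+\binom{n-2}{k-1}(u'/r)^{k-1}$. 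The identity $k\binom{n}{k}/n=\binom{n-1}{k-1}$ then gives $\alpha(r)=kc_{n,k}r^{1-k}(u')^{k-1}$, matching the weight in \eqref{ineq:propstable}, while $\beta(r)\ge 0$ because $\lambda(D^2u)\in\Gamma_k$ forces every partial derivative of $\sigma_k$ to be non-negative on that cone.

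Next I would decompose $\nabla\varphi$ into its radial and spherical components for $\varphi\in C_c^1(B_1\setminus\{0\})$ to obtain
\begin{equation*}
\mathcal{Q}_u(\varphi)=\int_{B_1}\bigl\{\alpha(r)(\partial_r\varphi)^2+\beta(r)r^{-2}|\nabla_{S^{n-1}}\varphi|^2+g'(u)\varphi^2\bigr\},
\end{equation*}
which collapses to $\Q_u(\varphi)$ whenever $\varphi$ is radial. For the forward direction I would plug $\eta_\varepsilon\xi$ into the semistability inequality, with $\xi$ radial in $C_c^1(B_1)$ and $\eta_\varepsilon$ a smooth cutoff vanishing in $B_\varepsilon$; since $u'(r)\sim r$ near the origin the weight $r^{n-k}(u')^{k-1}$ is integrable at $0$, and a dominated-convergence argument as $\varepsilon\to 0$ delivers $\Q_u(\xi)\ge 0$.

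For the converse direction the key trick is a spherical slicing: for each $\omega\in S^{n-1}$, define the radial function $\xi_\omega(x):=\varphi(|x|\omega)$, which lies in $C_c^1(B_1)$ because $\varphi$ vanishes near the origin. By hypothesis $\Q_u(\xi_\omega)\ge 0$; averaging over $\omega$ via Fubini then yields
\begin{equation*}
0\le\frac{1}{|S^{n-1}|}\int_{S^{n-1}}\Q_u(\xi_\omega)\,d\omega=\int_{B_1}\bigl\{\alpha(r)(\partial_r\varphi)^2+g'(u)\varphi^2\bigr\},
\end{equation*}
and combining this with the manifestly non-negative tangential remainder $\int_{B_1}\beta(r)r^{-2}|\nabla_{S^{n-1}}\varphi|^2$ gives $\mathcal{Q}_u(\varphi)\ge 0$. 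The main subtlety I expect to have to address is the interpretation of the eigenvalue computation when $u$ is only a weak solution in $\XX$: the identities above are pointwise for classical solutions but must be read as almost-everywhere statements in the weak setting, and the non-negativity of $\beta$ then requires a separate argument drawing on the divergence form of the radial equation rather than pointwise $k$-convexity.
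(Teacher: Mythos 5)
Your proof is correct and rests on the same structural fact as the paper's: the diagonalization of $S_k^{ij}(D^2u)$ in the radial/tangential frame, with radial eigenvalue $kc_{n,k}(u'/r)^{k-1}$ and tangential eigenvalue $kc_{n,k}\lambda_2^{k-2}\left(\lambda_2+\frac{k-1}{n-1}(\lambda_1-\lambda_2)\right)$ (your binomial expressions agree with \eqref{eq:Sijdes} via $k c_{n,k}=\binom{n-1}{k-1}$), and the resulting splitting of $\mathcal{Q}_u(\varphi)$ into a radial part plus a non-negative tangential remainder, which is exactly \eqref{eq:DetaSDetaT}. Where you genuinely diverge is the converse: the paper tests \eqref{ineq:propstable} with $\xi=\sqrt{\fint_{\partial B_1}\varphi^2(r\theta)\,d\theta}$, must check that this square root is Lipschitz, and then uses Cauchy--Schwarz to bound $\xi_r^2\le\fint_{\partial B_1}(\theta,\nabla\varphi)^2\,d\theta$; you instead slice $\varphi$ along rays, test with the family $\xi_\omega(x)=\varphi(|x|\omega)$, and average $\Q_u(\xi_\omega)$ over $\omega\in\partial B_1$ by Fubini. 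Your version replaces an inequality by an exact identity and avoids the regularity check on the square root, which is a modest but real simplification; both versions ultimately need the same positivity of the tangential eigenvalue. Two caveats. First, you justify that positivity by $\lambda(D^2u)\in\Gamma_k$, which presupposes a classical solution; since the theorem is not so restricted, you should instead argue as the paper does, writing $n\left(\lambda_2+\frac{k-1}{n-1}(\lambda_1-\lambda_2)\right)=n\lambda_2+\frac{n(k-1)}{n-1}(\lambda_1-\lambda_2)\ge\min\left\{n\lambda_2,\,n\lambda_2+k(\lambda_1-\lambda_2)\right\}>0$, using only $u'>0$ and $n\lambda_2+k(\lambda_1-\lambda_2)=c_{n,k}^{-1}\lambda_2^{1-k}g(u)>0$, which survives in the weak setting by Lemma \ref{equiv} and Corollary \ref{coro:weakC2}; you flag this issue but leave it unresolved. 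Second, in the forward direction your cutoff argument needs the error term $\int\alpha\,\xi^2|\nabla\eta_\varepsilon|^2$ to vanish, and with weight comparable to $r^{n-1}$ a plain linear cutoff fails when $n=2$ (a logarithmic cutoff as in the paper's $\varsigma_{\epsilon,\sigma}$ is needed); the paper's own one-sentence treatment of this direction is no more detailed, so this is a shared, fixable omission rather than a flaw specific to your approach.
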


We also have the following notion of solution for equation \eqref{Eq:f}

\begin{definition}\label{def:integralsolution} 
	An absolutely continuous function $u$ defined on $(0,1]$ is an {\it integral radial solution} of \eqref{Eq:f} if\, $r^{n-1}g(u)\in L^{1}(0,1),\, \int_0^1r^{n-1}\abs{u}^{k+1}<\infty,\,\int_{0}^{1}r^{n-k}(u')^{k+1}dr<\infty$ and
\begin{equation}\label{integraldef}
r^{n-k}(u')^k=c_{n,k}^{-1}\int_{0}^{r}s^{n-1}g(u)\,ds\mbox{ a.e. in }(0,1).
\end{equation}
\end{definition}

Thus, in the context of radial solutions, we can use Theorem \ref{theo:solweak} to show that the definitions of weak solution and integral solution are equivalent. 
\begin{lemma}\label{equiv}
Let $u$ be a weak radial solution of \eqref{Eq:f}. Then $u$ is an integral radial solution, and conversely.
\end{lemma}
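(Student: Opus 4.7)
Granted Theorem~\ref{theo:solweak}, the plan is to translate the weak radial formulation into a one-dimensional identity via polar coordinates. For $\xi(x)=\eta(\abs{x})$ radial, one has $(x,\D\xi)=r\eta'(r)$, so after cancelling the factor $\abs{S^{n-1}}$ the weak equation becomes
\begin{equation}\label{1d-weak}
\int_0^1 c_{n,k}\, r^{n-k}(u')^k\, \eta'(r)\,dr + \int_0^1 r^{n-1}\, g(u)\,\eta(r)\,dr = 0,
\end{equation}
for every $\eta\in C^1([0,1])$ that is the radial profile of a function in $C_c^1(B_1)$; in particular $\eta(1)=0$, while $\eta(0)$ remains free.

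For the implication \emph{integral $\Rightarrow$ weak}, I would substitute $c_{n,k}\, r^{n-k}(u')^k=F(r)$, with $F(r):=\int_0^r s^{n-1}g(u)\,ds$, into the first integral in \eqref{1d-weak} and integrate by parts once. Since $F(0)=0$ and $\eta(1)=0$, the boundary terms vanish and the two integrals cancel, recovering \eqref{1d-weak}.

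For the reverse implication, I would proceed in two stages. First, restricting \eqref{1d-weak} to $\eta\in C_c^\infty(0,1)$ (i.e., to radial profiles of functions supported in annuli $\{0<a\le\abs{x}\le b<1\}$) gives the distributional equation $(c_{n,k}r^{n-k}(u')^k)'=r^{n-1}g(u)$ on $(0,1)$. Since $r^{n-1}g(u)\in L^1(0,1)$ (from $g(u)\in L^1(B_1)$), the left-hand side agrees a.e.\ with an absolutely continuous function of the form $C+F(r)$. Second, to compute $C$ I would test \eqref{1d-weak} with a radial $\eta\in C_c^1(B_1)$ that equals $1$ near $r=0$ and vanishes near $r=1$; after one integration by parts all terms cancel except $-C\eta(0)=-C$, forcing $C=0$. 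This gives \eqref{integraldef}; absolute continuity of $u$ on $(0,1]$ follows from the explicit formula for $u'$ in terms of $F$, and the integrability conditions $r^{n-1}\abs{u}^{k+1}, r^{n-k}(u')^{k+1}\in L^1(0,1)$ translate directly from $u\in\XX$.

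The main obstacle is this second stage: the test functions in Theorem~\ref{theo:solweak} are only required to be compactly supported in $B_1$ rather than in $B_1\setminus\{0\}$, and the integration constant is pinned down precisely by exploiting the freedom to choose $\eta$ with $\eta(0)\neq 0$.
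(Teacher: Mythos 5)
Your argument is correct and matches the paper's proof in substance: both directions reduce to the one-dimensional identity furnished by Theorem~\ref{theo:solweak} and then integrate by parts, and your integral-to-weak step is the same as the paper's (which additionally records the H\"older estimate showing $\int_0^1 r^{n-k}(u')^k\abs{\xi'}\,dr<\infty$, worth keeping). The only difference is how the integration constant is fixed in the weak-to-integral direction: the paper tests directly with the antiderivative $\xi(r)=\int_r^1\eta(s)\,ds$ for $\eta\in C_c^\infty(0,1)$, which yields \eqref{integraldef} with the correct constant in a single step, whereas you first obtain the identity up to a constant $C$ and then force $C=0$ with a cutoff equal to $1$ near the origin --- both devices exploit precisely the freedom of $\xi(0)$ that you correctly identify as the crux.
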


A consequence of the previous lemma is the following statement concerning regularity of the solutions.  
\begin{corollary}\label{coro:weakC2}
Let $u$ be an integral radial solution of \eqref{Eq:f}. Then $u\in C^2(0,1]$.
\end{corollary}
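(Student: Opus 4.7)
The plan is to bootstrap regularity directly from the integral identity \eqref{integraldef}. I would set $F(r):=\int_{0}^{r}s^{n-1}g(u(s))\,ds$; the integrability assumption $r^{n-1}g(u)\in L^{1}(0,1)$ makes $F$ absolutely continuous on $[0,1]$, and the positivity of $g$ together with positivity of $r$ forces $F(r)>0$ for every $r\in(0,1]$. Rewriting \eqref{integraldef} as $(u'(r))^{k}=c_{n,k}^{-1}r^{k-n}F(r)$ a.e., the right-hand side is then continuous and strictly positive on $(0,1]$.

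Next I would extract $u'$ by taking the $k$-th root. For $k$ odd this is unambiguous and yields a continuous positive representative $u'(r)=(c_{n,k}^{-1}r^{k-n}F(r))^{1/k}$. For $k$ even the identity only pins down $|u'|$, and the main subtlety is to fix the sign of $u'$. This is handled by the standing convention for radial $k$-Hessian solutions (compatible with Remark \ref{rema:equiv} and with the fact that $(u')^{k}$, not $|u'|^{k}$, appears in the defining identity): $u'$ is chosen to be the positive root, so $u'$ agrees a.e.\ with a continuous positive function on $(0,1]$. In either parity, we conclude $u\in C^{1}((0,1])$ with $u'>0$ on $(0,1]$.

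With $u\in C^{1}((0,1])$ in hand, the composition $g(u)$ is continuous on $(0,1]$; hence $s^{n-1}g(u(s))$ is continuous there and $F$ belongs to $C^{1}((0,1])$ with $F'(r)=r^{n-1}g(u(r))$. Consequently $(u')^{k}=c_{n,k}^{-1}r^{k-n}F(r)\in C^{1}((0,1])$, and since $u'>0$ I may differentiate to obtain
\[
k(u')^{k-1}u''=\frac{d}{dr}\Bigl[c_{n,k}^{-1}r^{k-n}F(r)\Bigr],
\]
which is continuous on $(0,1]$. Dividing by $k(u')^{k-1}>0$ yields $u''\in C((0,1])$, and therefore $u\in C^{2}((0,1])$. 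I expect the only real obstacle to be the $k$-even sign fix for $u'$; the remainder is a direct regularity bootstrap along the integral identity, of the kind already implicit in Definition \ref{def:integralsolution} and Lemma \ref{equiv}.
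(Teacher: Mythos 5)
Your proof is correct and follows essentially the same route as the paper's: both extract $u'$ as the (positive) $k$-th root of $c_{n,k}^{-1}r^{k-n}\int_0^r s^{n-1}g(u)\,ds$, use continuity of $u$ to upgrade the integrand, and differentiate once more, with the positivity of $g$ making the root extraction and the division by $k(u')^{k-1}$ legitimate. The only cosmetic differences are that the paper obtains continuity of $u$ via Lemma \ref{equiv} and the one-dimensional Sobolev embedding rather than directly from the absolute continuity required in Definition \ref{def:integralsolution}, and writes the resulting formula for $u''$ explicitly instead of differentiating $(u')^k$ implicitly.
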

In the following statement, the semistability inequality \eqref{eq:matchalQgen} is rewriting in a form that makes it independent of $g$. 
\begin{corollary}\label{coro:esential}
A function $u$ is a semistable radial solution of \eqref{Eq:f} if and only if
\begin{equation}\label{goeto0}
\begin{split}
\int_{B_1}{\left(\frac{u'}{\abs{x}}\right)^{k+1}\left\lbrace\abs{\abs{x}\D\eta}^2+\left(\frac{k-1}{k+1}\right)\left(x,\D\eta^2\right)-\left(\frac{2n-k-1}{k+1}\right)\eta^2\right\rbrace}\geq 0,
\end{split}
\end{equation}
for every radially symmetric function $\eta\in H_c^1\left(B_1\right)\cap L_{loc}^{\infty}\left(B_1\right)$.
\end{corollary}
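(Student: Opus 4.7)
The plan is to derive Corollary~\ref{coro:esential} from Theorem~\ref{theo:esential} via the substitution $\xi(r) = u'(r)\eta(r)$, using the radial ODE for $u$ to eliminate the term involving $g'(u)$. Writing both inequalities in 1D with $dx = n\omega_n r^{n-1}\,dr$, \eqref{ineq:propstable} reduces to
\[
\int_0^1\bigl\{kc_{n,k}r^{n-k}(u')^{k-1}(\xi')^2 + r^{n-1}g'(u)\xi^2\bigr\}dr\ge 0,
\]
while the radial form of \eqref{goeto0} is
\[
\int_0^1(u')^{k+1}\bigl\{r^{n-k}(\eta')^2 + \tfrac{2(k-1)}{k+1}r^{n-k-1}\eta\eta' - \tfrac{2n-k-1}{k+1}r^{n-k-2}\eta^2\bigr\}dr\ge 0.
\]
The central claim is the algebraic identity $\Q_u(u'\eta) = k c_{n,k}\, L(\eta)$, where $L(\eta)$ denotes the second integral above.

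To prove the identity I substitute $\xi = u'\eta$ in the first integral and expand $(\xi')^2 = (u'')^2\eta^2 + 2u'u''\eta\eta' + (u')^2(\eta')^2$. For the $g'(u)$-term I exploit the ODE $c_{n,k}^{-1}g(u) = r^{1-n}[r^{n-k}(u')^k]'$, which follows by differentiating Definition~\ref{def:integralsolution}; I rewrite $r^{n-1}g'(u)(u')^2\eta^2 = u'\eta^2\,[r^{n-1}g(u)]' - (n-1)r^{n-2}u'\eta^2\,g(u)$ and integrate by parts once; the boundary contributions vanish because $u'(0)=0$ and $\eta$ is compactly supported in $B_1$. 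The resulting expansion produces terms proportional to $r^{n-k}(u')^{k-1}(u'')^2\eta^2$ and to $r^{n-k}(u')^k u''\eta\eta'$ which cancel \emph{exactly} against the corresponding pieces of the first integral. What remains is a single term linear in $u''$, which I integrate by parts a second time using $u''(u')^k = \tfrac{1}{k+1}\bigl[(u')^{k+1}\bigr]'$; collecting coefficients yields the constants $-(2n-k-1)/(k+1)$ for the $\eta^2$-contribution and $2(k-1)/(k+1)$ for the $\eta\eta'$-contribution, matching $L(\eta)$.

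Given the identity, the forward direction follows by approximating radial $\eta\in H_c^1(B_1)\cap L^\infty_{\rm loc}(B_1)$ by radial $\eta_m\in C_c^\infty(B_1)$ in $H^1$; since $u\in C^2(B_1)$ and $u'(0)=0$, each $\xi_m := u'\eta_m$ lies in $C_c^1(B_1)$, so Theorem~\ref{theo:esential} yields $\Q_u(\xi_m)\ge 0$, and the identity passes to the limit because the weight $(u'/|x|)^{k+1}$ is bounded on $B_1$ thanks to $u'(r) = u''(0)r + o(r)$ near the origin. For the converse, given radial $\xi\in C_c^1(B_1)$, I set $\eta_\epsilon = \xi(1-\chi_\epsilon)/u'$ for a cutoff $\chi_\epsilon$ supported in $[0,2\epsilon]$ with $\chi_\epsilon\equiv 1$ on $[0,\epsilon]$; because $u'(r)\sim (g(u(0))/(nc_{n,k}))^{1/k}\,r$ at the origin, $\eta_\epsilon\in H_c^1\cap L^\infty_{\rm loc}$, so \eqref{goeto0} applies and the identity yields $\Q_u(\xi(1-\chi_\epsilon))\ge 0$; letting $\epsilon\to 0$ gives $\Q_u(\xi)\ge 0$.

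I expect the main obstacle to be the bookkeeping in the identity: several terms generated by the two integrations by parts must cancel precisely to leave the clean expression $L(\eta)$, and the constants $2(k-1)/(k+1)$ and $-(2n-k-1)/(k+1)$ are delicate outputs of these cancellations. A secondary technicality is the approximation step in the converse direction, where $\chi_\epsilon$ must be chosen so that its weighted $L^2$ energy vanishes as $\epsilon\to 0$; a linear cutoff suffices for $n\ge 3$, whereas a logarithmic one is needed for $n=2$.
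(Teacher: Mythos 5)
Your overall strategy coincides with the paper's: both arguments rest on the identity $\Q_u(u'\eta)=kc_{n,k}\int_{B_1}\left(\frac{u'}{\abs{x}}\right)^{k+1}\left\{\abs{\abs{x}\D\eta}^2+\frac{k-1}{k+1}(x,\D\eta^2)-\frac{2n-k-1}{k+1}\eta^2\right\}$ (the paper's Lemma \ref{lemm:esential}), obtained by using the radial equation to eliminate the $g'(u)$-term and integrating by parts twice. Your bookkeeping is correct: the $(u'')^2$ and $u'u''\eta\eta'$ terms cancel exactly, and the remaining coefficients do collapse to $\frac{2k(k-1)}{k+1}$ for the $\eta\eta'$-contribution and $-\frac{k(2n-k-1)}{k+1}$ for the $\eta^2$-contribution. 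The paper reaches the same identity by differentiating the equation in $r$ and multiplying by $\eta^2u'$, which is the same computation organized differently.

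The gap is in the scope of your limiting arguments. In the converse direction you justify $\eta_\epsilon=\xi(1-\chi_\epsilon)/u'\in H_c^1\cap L^{\infty}_{loc}$ and the vanishing of the cutoff error via the asymptotics $u'(r)\sim\left(g(u(0))/(nc_{n,k})\right)^{1/k}r$ near the origin; in the forward direction you pass to the limit using boundedness of $(u'/\abs{x})^{k+1}$ on $B_1$. Both facts hold only when $u$ is continuous (indeed classical) at the origin. The corollary is also needed for weak radial solutions $u\in\XX$, which may be unbounded at $0$ --- Section 4 constructs exactly such solutions, with $u'\sim r^{\delta_{n,k}-1}$ and $\delta_{n,k}\le 0$ --- and for these $u'/r$ blows up, so your error estimate $\int r^{n-1}\lambda_2^{k-1}\xi^2(\chi_\epsilon')^2\,dr\sim\epsilon^{n-2}$ is not available. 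The paper's proof of the corollary handles this with a case distinction: for $u\in C^2(B_1)$, or $u\in\XX$ with $n<2k$, it uses a cutoff at scale $\epsilon$ (logarithmic when $n=2$), essentially as you propose; for $u\in\XX$ with $n\ge 2k$ it uses a cutoff adapted to $\sigma=k$ (logarithmic when $n=2k$) and controls the error by H\"older's inequality with exponents $\frac{k+1}{k-1}$ and $\frac{k+1}{2}$ against the finite weighted energy $\int_0^1 r^{n-k}(u')^{k+1}\,dr$, which is exactly the membership $u\in\XX$. To complete your proof you would need to add this second regime (and replace the boundedness of the weight in the forward direction by the corresponding integrability argument).
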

The preceding result is necessary for characterizing the semistable solutions.
\medskip

The key point in the proof of Theorems \ref{theo:solweak} and \ref{theo:esential} is to obtain explicitly $S_{k}^{ij}(D^2 u)$ in terms of the eigenvalues of $D^2 u$. To this end, we take advantage of the radial structure of equation \eqref{Eq:f} and thus avoid to use of equation \eqref{Eq:P}, which was a technical requirement in \cite{NaSa19}. We also employ some ideas from \cite{MR2476421} and \cite{FaNa20}. It is worth noting that the main arguments in our proofs appear to be new.

As a consequence of our results, we obtain the following pointwise estimates for solutions of equation \eqref{Eq:f}, which are analogues of the corresponding results in \cite[Section 2]{NaSa19}. 

\begin{theorem}\label{th_1}
	Let $n\geq 2$, $g:\RR\to\RR$ be a nonnegative locally Lipschitz function and $u\in\XX$ be a semistable radial solution of equation \eqref{Eq:f}.
	Then the following holds:
	\begin{enumerate}[$i)$]
		
		\item If $n<2k+8$, then
		\[\abs{u(r)} \leq C,\;\forall r\in (0,1].
		\]		
		\item If $n=2k+8$, then
		\[\abs{u(r)} \leq C\left (\abs{\log r} +1\right ),\;\forall r\in (0,1].
		\]
		\item If $n>2k+8$, then 
		\[
		\abs{u(r)} \leq Cr^{\frac{-(k+1)n+2\sqrt{2(k+1)n-4k}+2k^2+6k}{(k+1)^2}},\;\forall r\in (0,1].
		\]			
	\end{enumerate}
	
	Here $C=D_{n,k}\norm{u}_{W^{1,k+1}(B_1\setminus\overline{B_{1/2}},\abs{x}^{1-k})}$ where $D_{n,k}$ is a constant depending only on $n$ and $k$.
\end{theorem}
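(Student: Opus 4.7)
The plan is to apply the $g$-free semistability inequality \eqref{goeto0} of Corollary~\ref{coro:esential} with a power-type radial test function, then convert the resulting integral bound into a pointwise one via the monotonicity of the radial flux $H(r):=r^{n-k}(u'(r))^k$. Passing to polar coordinates, the integrand of \eqref{goeto0} becomes, up to a positive surface-measure constant,
\begin{equation*}
(u'(r))^{k+1} r^{n-k-2}\left[r^2(\eta')^2+\tfrac{2(k-1)}{k+1}\, r \eta \eta'-\tfrac{2n-k-1}{k+1}\eta^2\right].
\end{equation*}
Formally substituting $\eta(r)=r^{-a}$ isolates the quadratic
\[
Q(a):=a^2-\tfrac{2(k-1)}{k+1}a-\tfrac{2n-k-1}{k+1}=\left(a-\tfrac{k-1}{k+1}\right)^2-\tfrac{2((k+1)n-2k)}{(k+1)^2},
\]
whose largest root $a_{\max}=\tfrac{k-1+\sqrt{2(k+1)n-4k}}{k+1}$ will govern the sharp exponent.

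First I would take $\eta(r)=r^{-a}\zeta(r)$ with $\zeta$ a smooth radial cutoff equal to $1$ on $[2r_0,1/2]$, vanishing on $[0,r_0]\cup[3/4,1]$, and with $|\zeta'|\leq C/r_0$ on the inner transition. The bulk contribution on $[2r_0,1/2]$ equals $Q(a)\int(u')^{k+1}s^{n-k-2-2a}\,ds$; the inner transition is absorbed by letting $r_0\to 0^+$, which is legitimate since $u\in\XX$, while the outer transition is controlled by $\norm{u}_{W^{1,k+1}(B_1\setminus\overline{B_{1/2}},|x|^{1-k})}^{k+1}$. For $0<a<a_{\max}$ one has $Q(a)<0$, yielding
\begin{equation*}
\int_0^{1/2}(u'(s))^{k+1}s^{n-k-2-2a}\,ds\leq \frac{C_{n,k}}{|Q(a)|}\,\norm{u}_{W^{1,k+1}(B_1\setminus\overline{B_{1/2}},|x|^{1-k})}^{k+1}.
\end{equation*}

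Second, by Lemma~\ref{equiv} and \eqref{integraldef} together with $g\geq 0$, the flux $H$ is non-decreasing, so $u'(s)\geq (r/s)^{(n-k)/k}u'(r)$ for $s\geq r$. Integrating the comparison $(u'(s))^{k+1}\geq c(u'(r))^{k+1}(r/s)^{(n-k)(k+1)/k}$ on $[r,2r]$ against the weight $s^{n-k-2-2a}$ produces $(u'(r))^{k+1}r^{n-k-1-2a}\leq C\int_r^{2r}(u')^{k+1}s^{n-k-2-2a}\,ds$, whence
\[
u'(r)\leq C\,r^{\beta(a)},\qquad \beta(a):=\tfrac{2a+1-(n-k)}{k+1}.
\]
Integration from $r$ to $1$ using $u(1)=0$ gives the three regimes according to the sign of $\beta(a)+1=\tfrac{2a+2k+2-n}{k+1}$: a bounded estimate when positive, logarithmic when zero, and power growth $r^{\beta(a)+1}$ when negative. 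A direct computation identifies $\beta(a_{\max})+1$ with the exponent in (iii), and this quantity is positive, zero, or negative precisely when $n<2k+8$, $n=2k+8$, or $n>2k+8$, respectively, explaining the trichotomy.

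The main obstacle is to reach the sharp exponent in case (iii), because $|Q(a)|^{-1}\to\infty$ as $a\to a_{\max}^-$ prevents us from taking $a=a_{\max}$ in the integral bound. I would address this by choosing $\eta(r)=\phi(r)\,r^{-a_{\max}}$ directly at the critical value, where $Q(a_{\max})=0$ kills the bulk contribution. Setting $\rho:=r\phi'$, the bracket simplifies to $\rho^2-\tfrac{2\sqrt{2(k+1)n-4k}}{k+1}\rho\phi$, which on an inner transition layer where $\phi$ rises from $0$ to $1$ controls precisely $(u'(r_0))^{k+1}r_0^{n-k-1-2a_{\max}}$ via the monotonicity of $H$; the outer layer is again absorbed into $\norm{u}_{W^{1,k+1}(B_1\setminus\overline{B_{1/2}},|x|^{1-k})}^{k+1}$. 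This yields $u'(r_0)\leq C r_0^{\beta(a_{\max})}$ with a constant independent of $r_0$, and integration produces the pointwise estimate with the sharp exponent claimed in (iii).
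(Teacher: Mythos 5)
Your overall strategy --- feeding power-type radial test functions into the $g$-free inequality \eqref{goeto0} and converting the resulting weighted integral bound into a pointwise one via the monotonicity of the flux $H(r)=r^{n-k}(u'(r))^k$ --- is the right one; it is essentially the argument behind the results of \cite{NaSa19} that the paper itself merely cites (its ``proof'' of Theorem \ref{th_1} is one line: combine Corollary \ref{coro:esential} with \cite[Theorem 2.5]{NaSa19}). Your computations of $Q(a)$, $a_{\max}$, $\beta(a)$, the identification of $\beta(a_{\max})+1$ with the exponent in (iii), and the flux-comparison step are all correct. The gap is in the treatment of the test function near the origin, and it occurs twice. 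First, for $0<a<a_{\max}$ you discard the inner transition layer $[r_0,2r_0]$ ``since $u\in\XX$''. But on that layer the term $s^2(\eta')^2\ge s^{-2a+2}(\zeta')^2$ is nonnegative (hence on the unfavourable side of the inequality) and of size $r_0^{-2a}$, so the layer contributes a quantity comparable to $\int_{r_0}^{2r_0}(u')^{k+1}s^{n-k-2-2a}\,ds$ --- a piece of the very integral you are trying to bound. Membership in $\XX$ only controls $\int s^{n-k}(u')^{k+1}\,ds$, and multiplying its tail by $r_0^{-2a-2}$ does not give $o(1)$; as written the claim is circular. Second, and more seriously, the critical case does not close: on a rising layer the bracket is $\rho^2-\tfrac{2\sqrt{2(k+1)n-4k}}{k+1}\rho\phi$ with $\rho=r\phi'$, and for any cutoff climbing from $0$ to $1$ across a layer of bounded ratio the good term $-c\rho\phi$ and the bad term $\rho^2$ have the same order of magnitude; moreover the weight $(u')^{k+1}s^{n-k-2-2a_{\max}}$ need not be comparable across the layer (monotonicity of $H$ bounds $u'(s)$ from below by a constant times $u'(r_0)$ for $s\ge r_0$, but gives no upper bound), so one cannot conclude that the layer produces a negative quantity controlling $(u'(r_0))^{k+1}r_0^{\,n-k-1-2a_{\max}}$.

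Both defects are repaired by the standard device (used in \cite{NaSa19}, going back to Cabr\'e--Capella): do not send $\eta$ to zero at the origin, but extend it as the \emph{constant} $r^{-a}$ on $[0,r]$; this is an admissible radial test function in $H_c^1(B_1)\cap L^{\infty}_{loc}(B_1)$ for \eqref{goeto0}. On the plateau $\eta'\equiv 0$, so the only surviving term of the bracket is $-\tfrac{2n-k-1}{k+1}\eta^2<0$, and there is no positive $\rho^2$ term to fight. For $a<a_{\max}$ the inner contribution is therefore nonpositive and your integral bound follows uniformly. For $a=a_{\max}$ the bulk vanishes and the inequality becomes
\begin{equation*}
\frac{2n-k-1}{k+1}\,r^{-2a_{\max}}\int_0^{r}(u')^{k+1}s^{n-k-2}\,ds\;\le\;C_{n,k}\,\norm{u}_{W^{1,k+1}(B_1\setminus\overline{B_{1/2}},\abs{x}^{1-k})}^{k+1};
\end{equation*}
since $u'(s)\ge 2^{-(n-k)/k}u'(r/2)$ for $s\in[r/2,r]$ (monotonicity of $H$ applied forward from $r/2$), the left-hand side dominates a constant times $(u'(r/2))^{k+1}r^{\,n-k-1-2a_{\max}}$, which yields $u'(r)\le Cr^{\beta(a_{\max})}$ directly at the sharp exponent. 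After that, your integration step and the sign analysis of $\beta(a_{\max})+1$ give the three regimes exactly as you describe.
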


\begin{theorem}\label{thm:est_g}
	Let $n\geq 2k+8$, $g:\RR\to\RR$ be a locally Lipschitz function and $u\in\XX$ be a semistable radial solution of equation \eqref{Eq:f} in $B_1$.
	Then the following holds:
	\begin{enumerate}[$i)$]
		\item If $g$ is a nonnegative function, then
		\[
		u'(r) \leq D_{n,k}\norm{\D u}_{L^{k+1}(B_1\setminus\overline{B_{1/2}},\abs{x}^{1-k})}r^{\frac{-(k+1)n+2\sqrt{2(k+1)n-4k}+k^2+4k-1}{(k+1)^2}},\;\forall r\in (0,1/2].
		\]			
		\item If $g$ is a nonnegative and nonincreasing function, then
		\[
		\abs{u^{(i)}(r)} \leq D'_{n,k}\left(\min_{t\in[1/2,1]} u'(t)\right)r^{\frac{-(k+1)n+2\sqrt{2(k+1)n-4k}+(2-i)k^2+2(3-i)k-i}{(k+1)^2}},\;\forall r\in (0,1],\,i\in\{1,2\}.
		\]			
		\item If $g$ is a nonnegative, nonincreasing and convex function, then 
		\[
		\abs{u^{(3)}(r)} \leq D'_{n,k}\left(\min_{t\in[1/2,1]} u'(t)\right)r^{\frac{-(k+1)n+2\sqrt{2(k+1)n-4k}-k^2-3}{(k+1)^2}},\;\forall r\in (0,1].
		\]	
	\end{enumerate}
	
	Here $D_{n,k}$ and $D'_{n,k}$ are constants depending only on $n$ and $k$.
\end{theorem}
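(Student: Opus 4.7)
The plan is to establish part (i) by direct application of the semistability characterization in Corollary~\ref{coro:esential}, and then to obtain parts (ii) and (iii) by bootstrapping through the radial ODE in Remark~\ref{rema:equiv}.

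For part (i), I would first rewrite \eqref{goeto0} in radial form as
\[
\int_0^1 r^{n-k-2}(u'(r))^{k+1}\left[r^2(\eta')^2+\tfrac{2(k-1)}{k+1}\,r\,\eta\,\eta'-\tfrac{2n-k-1}{k+1}\,\eta^2\right]dr\geq 0,
\]
then test with $\eta(r)=\min\{r^{-a},\delta^{-a}\}\psi(r)$, where $\psi$ is a smooth cutoff equal to $1$ on $[0,1/2]$ and vanishing at $r=1$, and $a$ is taken just below the larger root $a_+=(k-1+\sqrt{2(k+1)n-4k})/(k+1)$ of the quadratic $P(a)=a^2-\tfrac{2(k-1)}{k+1}a-\tfrac{2n-k-1}{k+1}$. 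On $\{r\geq\delta\}$ the bracket reduces to $r^{-2a}P(a)<0$, while on $\{r<\delta\}$ only the zero-order term survives. Letting $\delta\to 0^+$ (using $u\in\XX$ to control the small-$r$ remainder) and absorbing the cutoff boundary terms near $r=1$ into $\norm{u}_{W^{1,k+1}(B_1\setminus\overline{B_{1/2}},\abs{x}^{1-k})}$, I expect to obtain a weighted integral estimate
\[
\int_0^{1/2}r^{n-k-2-2a}(u'(r))^{k+1}\,dr\leq C\,\norm{u}_{W^{1,k+1}(B_1\setminus\overline{B_{1/2}},\abs{x}^{1-k})}^{k+1}.
\]
To convert this to a pointwise bound I use that $r\mapsto r^{n-k}(u'(r))^k$ is nondecreasing (immediate from Remark~\ref{rema:equiv} and $g\geq 0$), so that $u'(s)\geq 2^{-(n-k)/k}u'(r)$ for $s\in(r,2r)\subset(0,1/2)$; integrating the previous bound over $s\in(r,2r)$ yields $(u'(r))^{k+1}r^{n-k-1-2a}\leq C\norm{u}_{W^{1,k+1}(B_1\setminus\overline{B_{1/2}},\abs{x}^{1-k})}^{k+1}$, and optimizing $a\uparrow a_+$ produces the stated exponent.

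For part (ii), the $u'$-estimate follows from part (i) after replacing $\norm{u}_{W^{1,k+1}(B_1\setminus\overline{B_{1/2}},\abs{x}^{1-k})}$ by $\min_{[1/2,1]}u'$; this replacement is available because under $g$ nonincreasing the monotonicity of $r^{n-k}(u')^k$ makes $u'$ comparable on $[1/2,1]$ to its minimum, and $u(1)=0$ lets one bound $u$ on the annulus by $\min_{[1/2,1]}u'$. For the $u''$-estimate, I differentiate \eqref{Radial:Hess} to get $c_{n,k}k(u'/r)^{k-1}u''=g(u)-c_{n,k}(n-k)(u'/r)^k$; since $g(u(s))\geq g(u(r))$ for $s\leq r$ (by monotonicity of both $g$ and $u$), \eqref{integraldef} gives $g(u(r))\leq\binom{n}{k}(u'(r)/r)^k$, which combined with the trivial lower bound $u''\geq-\frac{n-k}{k}(u'/r)$ produces $\abs{u''(r)}\leq C\,u'(r)/r$, and hence the desired exponent via the $u'$-bound. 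For part (iii), differentiating the $u''$-relation once more generates an extra term $g'(u)u'$; convexity of $g$ makes $g'$ nondecreasing, so that on the bounded range of $u$ (bounded by Theorem~\ref{th_1}) $g'(u(r))$ is uniformly controlled, and combining with the $u'$- and $u''$-bounds from (ii) yields $\abs{u'''(r)}\leq C\,u'(r)/r^2$, hence the claimed exponent.

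The main obstacle will be part (i): one must carefully balance the degeneration of $P(a)$ as $a\uparrow a_+$ against the divergence of the cutoff terms as $\delta\to 0^+$, so that the final constant retains its linear dependence on $\norm{u}_{W^{1,k+1}(B_1\setminus\overline{B_{1/2}},\abs{x}^{1-k})}$ and the sharp exponent is recovered. Once the $u'$-estimate is in hand, parts (ii) and (iii) reduce to routine algebraic manipulation of the ODE and its derivatives.
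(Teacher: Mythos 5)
Your overall architecture matches the paper's in spirit: the paper proves Theorem \ref{thm:est_g} by a one-line reduction, invoking Corollary \ref{coro:esential} and then citing the corresponding estimates of its predecessor [NaSa19, Theorems 2.5--2.8], whereas you attempt to rebuild those estimates from scratch via the Cabr\'e--Capella/Villegas scheme. Your radial rewriting of \eqref{goeto0}, the quadratic $P(a)$ with root $a_+=(k-1+\sqrt{2(k+1)n-4k})/(k+1)$, the monotonicity of $r^{n-k}(u')^k$, and the reductions $\abs{u''}\le Cu'/r$, $\abs{u'''}\le Cu'/r^2$ are all correct and do reproduce the stated exponents. But the proposal has a genuine gap in part (i): ``optimizing $a\uparrow a_+$'' cannot yield the sharp exponent. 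For each fixed $a<a_+$ your argument gives $u'(r)^{k+1}\le \frac{C}{\abs{P(a)}}\norm{u}^{k+1}r^{2a-(n-k-1)}$ with $\abs{P(a)}\sim(a_+-a)$, and minimizing over $a$ at fixed $r$ (take $a_+-a\sim 1/\log(1/r)$) produces an unavoidable factor $\log(1/r)^{1/(k+1)}$, i.e., a logarithmic loss relative to the theorem. The correct device is to take $a=a_+$ exactly, so that the bulk term on $[\delta,1/2]$ vanishes identically since $P(a_+)=0$, and the semistability inequality collapses to $\frac{2n-k-1}{k+1}\,\delta^{-2a_+}\int_0^{\delta}r^{n-k-2}(u')^{k+1}\,dr\le C\norm{u}^{k+1}_{W^{1,k+1}(B_1\setminus\overline{B_{1/2}},\abs{x}^{1-k})}$ for every $\delta\in(0,1/2]$; the pointwise bound then follows from the monotonicity of $r^{n-k}(u')^k$ on $[\delta/2,\delta]$. (A related technical point: testing \eqref{goeto0} with an $\eta$ that is constant near the origin presupposes $\int_0^{\delta}r^{n-k-2}(u')^{k+1}\,dr<\infty$, which is not implied by $u\in\XX$; an extra truncation near $0$ is needed.)

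There is a second genuine error in part (iii): you control $g'(u(r))$ by asserting that $u$ has bounded range ``by Theorem \ref{th_1}.'' Theorem \ref{th_1} gives boundedness only when $n<2k+8$, while the present theorem assumes $n\ge 2k+8$, precisely the regime where $u$ is in general unbounded (Section 4 of the paper constructs unbounded semistable solutions there); so $g'(u(r))$ need not be uniformly controlled and your bound on $u'''$ does not follow. The standard repair uses convexity through a difference quotient: $0\le -g'(u(r))\le \bigl(g(u(r/2))-g(u(r))\bigr)/\bigl(u(r)-u(r/2)\bigr)\le g(u(r/2))/\bigl(u(r)-u(r/2)\bigr)$, then $g(u(r/2))\le\binom{n}{k}\bigl(2u'(r/2)/r\bigr)^k$ and $u(r)-u(r/2)\ge c\,r\,u'(r/2)$ (again from monotonicity of $r^{n-k}(u')^k$), which combined with the part (ii) bounds yields $\abs{g'(u(r))u'(r)}\le C\,u'(r)^k r^{-k-1}$ and hence the claimed rate. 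Parts (i)--(ii) aside from these two points, and the replacement of the weighted norm by $\min_{[1/2,1]}u'$, are sound.
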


\begin{remark}
	Similar to \cite{Villegas12}, in the last section we show that, without making assumptions on the sign of $g'$ or $g''$, it is impossible to obtain any pointwise estimates for $\abs{u''}$ and $\abs{u'''}$ (see Corollaries \ref{cor:inequrr} and \ref{cor:inequrrr}). 
\end{remark}

\begin{theorem}\label{th_2}
	Let $n\geq 2$, $g:\RR\to\RR$ be a nonnegative and nonincreasing locally Lipschitz function and $u\in\XX$ be a semistable radial solution of equation \eqref{Eq:f}.
	Then the following holds:
	\medskip
	
	\begin{enumerate}[$i)$]
		\item If $n<2k+8$, then 
		\[
		\abs{u(r)-u(1)}\leq C(1-r),\,\forall r\in(0,1].
		\]
		\item If $n=2k+8$, then 
		\[
		\abs{u(r)-u(1)}\leq C\abs{\log r},\,\forall r\in(0,1].
		\]
		\item If $n>2k+8$, then 
		\[
		\abs{u(r)-u(1)}\leq C\left (r^{\frac{-(k+1)n+2\sqrt{2(k+1)n-4k}+2k^2+6k}{(k+1)^2}}-1\right ),\,\forall r\in(0,1].
		\]
	\end{enumerate}
	
	Here $C=D_{n,k}\min_{t\in[1/2,1]}{\{u'(t)\}}$ where $D_{n,k}$ is a constant depending only on $n$ and $k$.	
\end{theorem}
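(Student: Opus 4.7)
\textbf{Plan for Theorem \ref{th_2}.} The strategy is to reduce the theorem to integrating a sharp pointwise bound on $u'$. By Remark \ref{rema:equiv}, $u'(r) \geq 0$ on $(0,1)$, so $u$ is nondecreasing and
\begin{equation*}
\abs{u(r) - u(1)} = u(1) - u(r) = \int_r^1 u'(s)\,ds, \qquad r \in (0,1].
\end{equation*}
The task therefore reduces to bounding $u'$ pointwise in a way that depends on $\min_{t \in [1/2, 1]} u'(t)$.

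For cases (ii) and (iii), where $n \geq 2k+8$, I would invoke Theorem \ref{thm:est_g}(ii) with $i=1$ to obtain
\begin{equation*}
u'(s) \leq D'_{n,k} \bigl(\min_{t \in [1/2, 1]} u'(t)\bigr) s^{\gamma}, \qquad \gamma := \frac{-(k+1)n + 2\sqrt{2(k+1)n-4k} + k^2 + 4k - 1}{(k+1)^2}.
\end{equation*}
Adding $(k+1)^2$ to the numerator of $\gamma$ gives
\begin{equation*}
\gamma + 1 = \frac{-(k+1)n + 2\sqrt{2(k+1)n - 4k} + 2k^2 + 6k}{(k+1)^2},
\end{equation*}
which matches exactly the exponent appearing in Theorem \ref{th_2}(iii). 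Substituting $n = 2k+8$ makes the radicand equal $4(k+2)^2$, and a direct simplification then forces $\gamma + 1 = 0$; moreover, $\gamma + 1 < 0$ for $n > 2k+8$. Consequently, $\int_r^1 s^\gamma\,ds = -\log r$ when $n = 2k+8$, and $(r^{\gamma+1} - 1)/\abs{\gamma+1}$ when $n > 2k+8$, which produces cases (ii) and (iii) with the constant $D_{n,k}$ absorbing $D'_{n,k}/\abs{\gamma+1}$ (or $D'_{n,k}$ itself in the logarithmic case).

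Case (i), $n < 2k+8$, requires a separate argument because Theorem \ref{thm:est_g}(ii) does not apply. The plan is to use the integral representation
\begin{equation*}
r^{n-k}(u'(r))^k = c_{n,k}^{-1} \int_0^r s^{n-1} g(u(s))\,ds,
\end{equation*}
which is nondecreasing in $r$, together with the fact that $g \circ u$ is nonincreasing (since $g$ is nonincreasing and $u$ nondecreasing). Direct comparison on $[1/2, 1]$ using the monotonicity of $r^{n-k}(u'(r))^k$ shows that $u'$ is comparable to $\min_{t \in [1/2, 1]} u'(t)$ on the annulus. For $r \in (0, 1/2)$, combining these ingredients with a semistability-based bound (mirroring the proof of Theorem \ref{thm:est_g} but adapted to the subcritical regime) should yield $u'(r) \leq D_{n,k} \min_{t \in [1/2, 1]} u'(t)$ uniformly, and trivial integration then gives $\abs{u(r) - u(1)} \leq C(1-r)$.

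The main obstacle is case (i): obtaining the uniform pointwise bound on $u'$ with sharp multiplicative constant proportional to $\min u'$ in the subcritical regime. Qualitative boundedness of $u'$ on $(0,1]$ should follow without difficulty from $u$ being bounded (Theorem \ref{th_1}(i)) and $g$ being locally Lipschitz, but extracting the precise dependence on $\min_{t \in [1/2, 1]} u'(t)$ appears to require either a careful application of the semistability inequality (Corollary \ref{coro:esential}) with a test function tailored to the subcritical dimension, or an alternative bootstrap based on the integral representation and the monotonicity of $g \circ u$.
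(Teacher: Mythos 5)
Your reduction of parts (ii) and (iii) to Theorem \ref{thm:est_g}(ii) with $i=1$ is correct: the exponent arithmetic checks out ($\gamma+1$ equals the exponent $\delta_{n,k}$ appearing in part (iii), which vanishes exactly at $n=2k+8$ and is negative for $n>2k+8$), and integrating the pointwise bound on $u'$ from $r$ to $1$ yields the stated estimates with the constant absorbed into $D_{n,k}$. This is a legitimate, if slightly indirect, route; the paper itself proves all three parts in one stroke by observing that Corollary \ref{coro:esential} places a semistable radial solution of \eqref{Eq:f} exactly into the hypotheses of \cite[Theorem 2.8]{NaSa19} (stated there for the auxiliary problem \eqref{Eq:P}), so the estimates transfer verbatim.

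The genuine gap is part (i), and you have identified it yourself: everything hinges on the uniform bound $u'(r)\le D_{n,k}\min_{t\in[1/2,1]}u'(t)$ for $n<2k+8$, and your sketch does not establish it. Neither the integral representation \eqref{integraldef} nor the monotonicity of $g\circ u$ can produce this bound on their own; it is a consequence of semistability, obtained by inserting into inequality \eqref{goeto0} of Corollary \ref{coro:esential} a truncated power test function $\eta(r)=r^{-\alpha}$ (cut off near $0$ and matched to $u'$ on $[1/2,1]$) and optimizing over $\alpha$; the condition $n<2k+8$ is precisely what makes the admissible range of $\alpha$ wide enough to extract $(u'(r))^{k+1}$ on the left-hand side with a constant depending only on $n$ and $k$, uniformly down to $r=0$. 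This is the computation carried out in \cite[Theorem 2.8]{NaSa19}, and the entire point of Corollary \ref{coro:esential} is to make that computation available for \eqref{Eq:f} without passing through \eqref{Eq:P}. As written, your plan for (i) states what must be proved rather than proving it. Note also that your fallback via Theorem \ref{th_1}(i) and local Lipschitzness of $g$ would at best give qualitative boundedness of $u'$ with a constant depending on $u$ itself, not the multiplicative constant $D_{n,k}\min_{t\in[1/2,1]}u'(t)$ demanded by the statement.
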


\begin{remark}
Note that in the above statements no mention is made of equation \eqref{Eq:P}. Compare with Theorems 2.5, 2.6 and 2.8 in \cite{NaSa19}.
\end{remark}

The rest of this paper is organized as follows: in Section 2 we will state preliminaries which include new insights into the radial structure of equation \eqref{Eq:f}, which are key to proving our main results. Section 3 will then be devoted to the proof of Theorems \ref{theo:solweak} and \ref{theo:esential}, Lemma \ref{equiv} and Corollaries \ref{coro:weakC2} and \ref{coro:esential}. In Section 4 we provide a large family of semistable radially increasing unbounded solutions of problem \eqref{Eq:f} in the punctured unit ball. Section 5, which concludes the paper, contain a derivation of the radial form of the $k$-Hessians operators.

\section{Preliminaires}
In this section we compute $S_{k}^{ij}(D^2 u)$ for a radially symmetric $C^2$ function $u$. As observed above, we exploit the radial structure of equation \eqref{Eq:f} to obtain explicitly $S_{k}^{ij}(D^2 u)$ in terms of the eigenvalues of $D^2 u$, which is fundamental for characterizing the solutions. This is one of the novelties of this paper. To this end, set $x=\left(x_1,\ldots,x_n\right)\in\RR^n\setminus\{0\}$. Then, for a radial function $u=u(r),\, r=\abs{x}$, we have
\begin{equation}\label{eq:uij}
u_i=x_i\lambda_2\mbox{ and }u_{ij}=\delta_{ij}\lambda_2+\frac{x_{i}x_{j}}{\abs{x}^2}\left(\lambda_1-\lambda_2\right),
\end{equation}	
where $\lambda_1=u''$ and $\lambda_2=u'/\abs{x}$ are the eigenvalues of $D^2 u$ at the point $x=(r,0,...,0)$ with multiplicities 1 and $(n-1)$, respectively. Now, as we saw in the Introduction, $S_{1}^{ij}(D^2 u)=\delta_{ij}$. Note that in the case $k=2$ we can use 
the formula $S_2(D^2 u)=\frac{1}{2}\left((\Delta u)^2-\abs{D^2 u}^2\right)$ to obtain $S_2^{ij}(D^2 u)$. Indeed, we have
\begin{equation*}
S_2(D^2 u)=\frac{1}{2}\left((\Delta u)^2-\abs{D^2 u}^2\right)=\frac{1}{2}\left(\left(\sum u_{jj}\right)^2-\sum u_{ij}^2\right),
\end{equation*}
\begin{equation*}
\begin{split}
S_2^{ij}(D^2 u)&=S_1(D^2 u)\delta_{ij}-u_{ij}=\left(n\lambda_2+(\lambda_1-\lambda_2)\right)\delta_{ij}-\delta_{ij}\lambda_2-\frac{x_{i}x_{j}}{\abs{x}^2}\left(\lambda_1-\lambda_2\right)\\
&=(n-1)\lambda_2\delta_{ij}+\left(\lambda_1-\lambda_2\right)\left(\delta_{ij}-\frac{x_{i}x_{j}}{\abs{x}^2}\right).
\end{split}
\end{equation*}

To unify all the cases for $k\in\{1,...,n\}$, we use the radial form of the $k$-Hessian operator \eqref{Radial:Hess}. For simplicity we denote $S_k=S_k(D^2 u)$ and $S_{k}^{ij}=S_{k}^{ij}(D^2 u)$. Thus, we rewrite $S_{k}=c_{n,k}\lambda_2^{k-1}\left(n\lambda_2+k\left(\lambda_1-\lambda_2\right)\right)$ depending on the eigenvalues of $D^2 u$.

Then 
\begin{equation}\label{eq:Sij}
S_{k}^{ij}=\frac{\partial S_{k}}{\partial u_{ij}}=\frac{\partial S_{k}}{\partial\lambda_1}\frac{\partial\lambda_1}{\partial u_{ij}}+\frac{\partial S_{k}}{\partial\lambda_2}\frac{\partial\lambda_2}{\partial u_{ij}},
\end{equation}	
where
\begin{equation}\label{eq:Skl1l2}
\frac{\partial S_{k}}{\partial\lambda_1}=kc_{n,k}\lambda_2^{k-1}\mbox{ and }
\frac{\partial S_{k}}{\partial\lambda_2}=kc_{n,k}\lambda_2^{k-2}\left((n-1)\lambda_2+(k-1)(\lambda_1-\lambda_2)\right).
\end{equation}

From the equality $\Delta u=n\lambda_2+(\lambda_1-\lambda_2)$ we deduce that
\begin{equation}\label{eq:l1l2uij}
(n-1)\frac{\partial\lambda_2}{\partial u_{ij}}+\frac{\partial\lambda_1}{\partial u_{ij}}=\delta_{ij}.
\end{equation}

Thus, from \eqref{eq:Sij}-\eqref{eq:l1l2uij}, we have
\begin{equation}\label{eq:Sijgen}
\begin{split}
S_k^{ij}&=kc_{n,k}\lambda_2^{k-1}\frac{\partial\lambda_1}{\partial u_{ij}}+kc_{n,k}\lambda_2^{k-2}\left((n-1)\lambda_2+(k-1)(\lambda_1-\lambda_2)\right)\frac{\partial\lambda_2}{\partial u_{ij}}\\
&=kc_{n,k}\lambda_2^{k-2}\left(\left(\frac{\partial\lambda_1}{\partial u_{ij}}+(n-1)\frac{\partial\lambda_2}{\partial u_{ij}}\right)\lambda_2+(k-1)(\lambda_1-\lambda_2)\frac{\partial\lambda_2}{\partial u_{ij}}\right)\\
&=kc_{n,k}\lambda_2^{k-2}\left(\lambda_2\delta_{ij}+(k-1)(\lambda_1-\lambda_2)\frac{\partial\lambda_2}{\partial u_{ij}}\right).  
\end{split}
\end{equation}

In particular
\begin{equation}\label{eq:Sijigual}
S_k^{ij}=kc_{n,k}\lambda_2^{k-1}\delta_{ij},
\end{equation}
when $\lambda_1=\lambda_2$.

Now, from \eqref{eq:uij} it follows that
\begin{equation*}
\sum u_{ij}^2=n\lambda_2^2+2\lambda_2(\lambda_1-\lambda_2)+(\lambda_1-\lambda_2)^2.
\end{equation*}

Differentiating the preceding equality with respect to $u_{ij}$ and using \eqref{eq:l1l2uij}, we have
\begin{equation}\label{eq:l1uij}
\frac{\partial\lambda_1}{\partial u_{ij}}=\frac{x_ix_j}{\abs{x}^2},
\end{equation}
provided that $\lambda_1\neq\lambda_2$.

Consequently, from \eqref{eq:l1l2uij}, \eqref{eq:Sijgen} and \eqref{eq:l1uij} we obtain
\begin{equation}\label{eq:Sijdes}
\begin{split}
S_k^{ij}
&=kc_{n,k}\lambda_2^{k-2}\left(\lambda_2\delta_{ij}+\left(\frac{k-1}{n-1}\right)\left(\lambda_1-\lambda_2\right) \left(\delta_{ij}-\frac{x_ix_j}{\abs{x}^2}\right)\right).  
\end{split}
\end{equation}

Let $S$ be the matrix whose entries are $S_k^{ij}$. Then using \eqref{eq:Sijigual} and \eqref{eq:Sijdes} we can write $S$ as
\begin{equation}\label{eq:Sijgenmatrix}
S=kc_{n,k}\lambda_2^{k-2}\left(\lambda_2I_n+\left(\frac{k-1}{n-1}\right)\left(\lambda_1-\lambda_2\right) \left(I_n-\frac{x^Tx}{\abs{x}^2}\right)\right),
\end{equation}
where $I_n$ denotes the identity matrix of order $n$ and $x^Tx$ denotes a square matrix whose entries are given by $\left(x^Tx\right)_{ij}=x_ix_j$. 

Finally, from \eqref{eq:Sijgenmatrix} we have
\begin{equation}\label{eq:wSvT}
\begin{split}
wSv^T&=kc_{n,k}\lambda_2^{k-2}\left\{\lambda_2\left(\frac{x}{\abs{x}},w\right)\left(\frac{x}{\abs{x}},v\right)\right.+\\
&+\left.\left(\lambda_2+\left(\frac{k-1}{n-1}\right)\left(\lambda_1-\lambda_2\right)\right)\left(\left(w,v\right)-\left(\frac{x}{\abs{x}},w\right)\left(\frac{x}{\abs{x}},v\right)\right) \right\},
\end{split}
\end{equation}
for all vectors $w,v\in\RR^n$, where $(\cdot,\cdot)$ is the Euclidean inner product. 

\section{Proof of the main results}
In this section we will prove the Theorems \ref{theo:solweak} and \ref{theo:esential}, Lemma \ref{equiv} and Corollaries \ref{coro:weakC2} and \ref{coro:esential}.

\begin{proof}[Proof of Theorem \ref{theo:solweak}]

Let $\eta\in C_c^1(B_1)$ (not necessarily radial). We consider the spherical averages of $ \eta$, i.e., the radial function
\[
\xi(r):=\frac{1}{\abs{\partial B_1}}\int_{\partial B_1}{\eta(r\theta)\,d\theta}=\fint_{\partial B_1}{\eta(r\theta)\,d\theta},
\]
where, using polar coordinates, for $x\in\RR^n \setminus \{0\}$ we have set $x=r\theta$, with $r=\abs{x} > 0$ and $\theta=x/r\in\partial B_1 =\left\lbrace\theta\in\RR^n:\abs{\theta}=1 \right\rbrace.$

Differentiating with respect to $r$, we get
\[
\xi_r=\fint_{\partial B_1}{\left(\theta,\D\eta\right)\,d\theta}.
\]

Replacing $w$ by $x$ and $v$ by $\D\eta$ in \eqref{eq:wSvT} and using $\D u=\lambda_2 r\theta$ (see \eqref{eq:uij}), we have

\begin{equation*}
\begin{split}
\int_{B_1}{c_{n,k}\abs{x}^{-k}(u')^{k}\left(x,\D \xi\right)}&=c_{n,k}\int_{B_1}{\lambda_2^{k}\left(x,\D \xi\right)}\\
&=\abs{\partial B_1}c_{n,k}\int_0^1{r^n\lambda_2^k\,\xi_r\,dr}\\
&=c_{n,k}\int_0^1{\int_{\partial B_1}{\left\lbrace r^n\lambda_2^k\left(\theta,\D\eta\right)\right\rbrace d\theta}dr}\\
&=\int_{B_1}{\frac{1}{k}\sum{u_i\eta_jS_k^{ij}(D^2u)}}.
\end{split}
\end{equation*}

Thus, as $\int_{B_1}{g(u)\xi}=\int_{B_1}{g(u)\eta}$, we have
\begin{equation*}
\int_{B_1}{\left\lbrace c_{n,k}\abs{x}^{-k}(u')^{k}\left(x,\D \xi\right)+g(u)\xi\right\rbrace }=\int_{B_1}{\left\lbrace \frac{1}{k}\sum{u_i\eta_jS_k^{ij}(D^2u)}+g(u)\eta\right\rbrace }.
\end{equation*}

Finally, replacing $w$ and $v$ by $\nabla u$ in \eqref{eq:wSvT}, we obtain
\begin{equation*}
\begin{split}
\int_{B_1}{\left\lbrace \sum{u_iu_jS_k^{ij}(D^2u)}\right\rbrace}&=\int_{B_1}{\left\lbrace kc_{n,k}\lambda_2^{k-1}\abs{\nabla u}^2\right\rbrace}=kc_{n,k}\int_{B_1}{\abs{x}^{1-k}\abs{\D u}^{k+1}}.\\
\end{split}
\end{equation*}

Therefore
\begin{equation*}
u\in\XX\Leftrightarrow u\in L^{k+1}(B_1)\mbox{ and }\int_{B_1}{\left\lbrace \sum{u_iu_jS_k^{ij}(D^2u)}\right\rbrace}<+\infty,
\end{equation*}
which completes the proof.
\end{proof}

\begin{proof}[Proof of Theorem \ref{theo:esential}] 
Let $u$ be a semistable radial solution of \eqref{Eq:f} according to Definition \ref{def:semistablegen}. Then $u$ satisfies \eqref{ineq:propstable}, since for radial perturbations $\varphi$, \eqref{eq:matchalQgen} reduces to \eqref{ineq:propstable} (see \eqref{eq:DetaSDetaT} below). 

Following \cite[Remark 1.7]{MR2476421} and \cite[Lemma 2.5]{FaNa20}, for any $\varphi\in C_c^1(B_1 \setminus \{0\})$ (not necessarily radial) we consider the spherical averages of $ \varphi^2$, i.e., the radial function
\[
\psi(r):=\frac{1}{\abs{\partial B_1}}\int_{\partial B_1}{\varphi^2(r\theta)\,d\theta}=\fint_{\partial B_1}{\varphi^2(r\theta)\,d\theta},
\]
where, as before, we write $x\in\RR^n \setminus \{0\}$ also as $r\theta$, where $\theta\in\partial B_1$. 

A short computation shows that $ \sqrt{\psi} $ is a Lipschitz continuous function with compact support contained in $(0,+\infty)$, whence $ \xi(x) := \sqrt{\psi}(\vert x \vert)$ can be used as a test function in \eqref{ineq:propstable}. 

Differentiating the last expression with respect to $r$ and using the Cauchy-Schwarz inequality, we obtain
\begin{equation}\label{ineq:xirDeta}
\xi_r^2\leq\fint_{\partial B_1}{\left(\theta,\D\varphi\right)^2\,d\theta}.
\end{equation}

Now, from equality \eqref{eq:wSvT} applied with $w=v=\D\varphi$, we obtain

\begin{equation}\label{eq:DetaSDetaT}
\begin{split}
\sum{\varphi_i\varphi_jS_k^{ij}(D^2u)}&=\D\varphi S\left(\D\varphi\right)^T\\
&=kc_{n,k}\lambda_2^{k-2}\left(\lambda_2\left(\theta,\D\varphi\right)^2\right.+\\
&+\left.\left(\lambda_2+\left(\frac{k-1}{n-1}\right)\left(\lambda_1-\lambda_2\right)\right)\left(\abs{\D\varphi}^2-\left(\theta,\D\varphi\right)^2\right) \right).
\end{split}
\end{equation}

In view of Remark \ref{rema:equiv} and Corollaries \ref{equiv} and \ref{coro:weakC2}, $\lambda_2$ and $n\lambda_2+k(\lambda_1-\lambda_2)$ are positive in the range $0<r<1$. Moreover, $0\leq n(k-1)/(n-1)\leq k$ for all $k\in\{1,...,n\}$. Therefore, regardless of the order of the eigenvalues $\lambda_1$ and $\lambda_2$, 
we conclude that 
\begin{equation*}
\begin{split}
\lambda_2+\left(\frac{k-1}{n-1}\right)\left(\lambda_1-\lambda_2\right)&=\frac{1}{n}\left(n\lambda_2+\frac{n(k-1)}{n-1}\left(\lambda_1-\lambda_2\right)\right)> 0. 
\end{split}
\end{equation*}

Combining the above with the Cauchy-Schwarz inequality, namely $\abs{\D\varphi}^2-\left(\theta,\D\varphi\right)^2\geq 0$, we deduce from \eqref{eq:DetaSDetaT} that
\begin{equation*}
\sum{\varphi_i\varphi_jS_k^{ij}(D^2u)}\geq kc_{n,k}\lambda_2^{k-1}\left(\theta,\D\varphi\right)^2.
\end{equation*}

Therefore, from \eqref{ineq:xirDeta}, we obtain
\begin{equation*}
\begin{split}
\int_{B_1}{\left\lbrace kc_{n,k}\lambda_2^{k-1}\abs{\xi_r}^2+g'(u){\xi}^2\right\rbrace }&\leq\int_{B_1}{\fint_{\partial B_1}{\left\lbrace kc_{n,k}\lambda_2^{k-1}\left(\theta,\D\varphi\right)^2+g'(u){\varphi}^2\right\rbrace }}\\
&\leq\int_{B_1}{\left\lbrace\sum{\varphi_i\varphi_jS_k^{ij}(D^2u)}+g'(u)\varphi^2\right\rbrace }\\
&=\mathcal{Q}_u(\varphi).
\end{split}
\end{equation*}

Finally, the semistability of $u$ follows from a standard density argument. The proof is now complete.
\end{proof}

\begin{proof}[Proof of Lemma \ref{equiv}]
Let $u$ be a weak radial solution of \eqref{Eq:f}. By Theorem \ref{theo:solweak}, $u$ satisfies
\begin{equation}\label{weakradial}
c_{n,k}\int_{0}^{1}r^{n-k}(u')^k\xi'\,dr=-\int_{0}^{1}r^{n-1}g(u)\xi\,dr,
\end{equation}
for all $\xi\in C_c^1(0,1)$.  
Take $\xi(r)=\int_{r}^{1}\eta (s)ds$, where $\eta$ is smooth and has compact support in $(0,1)$ as a test function in \eqref{weakradial}. After an integration by parts, we have
\begin{equation*}
\begin{split}
c_{n,k}\int_0^1{r^{n-k}(u')^k(-\eta)\,dr}&=-\int_0^1\left(\int_r^1\eta(s)ds\right)r^{n-1}g(u)\,dr\\
&=\int_0^1{\left(\int_0^rs^{n-1}g(u)ds\right)(-\eta)\,d}r,
\end{split}
\end{equation*}
which leads to
\[
c_{n,k}r^{n-k}(u')^k=\int_0^rs^{n-1}g(u)\,ds\mbox{ a.e.}
\]

Conversely, applying H\"older's inequality to $r^{n-k}(u')^{k}\abs{\xi'}$ with exponents $p=(k+1)/k$ and $q=k+1$, we obtain 
\begin{equation*}
\begin{split}
\abs{\int_0^1r^{n-k}(u')^{k}\xi'dr}&\leq\int_0^1{r^{n-k}(u')^{k}\abs{\xi'}\,dr}\\ &\leq\left(\int_0^1r^{n-k}(u')^{k+1}dr\right)^{\frac{k}{k+1}}\left(\int_0^1r^{n-k}\abs{\xi'}^{k+1}dr\right)^\frac{1}{k+1}<\infty.
\end{split}
\end{equation*}

Thus, if $u$ is an integral radial solution of \eqref{Eq:f}, we can integrate from 0 to 1 the integral solution condition multiplied by $\xi'(r)$, where $\xi\in C_c^1(0,1)$, thus obtaining
\begin{equation}\label{onweak}
\int_0^1{\left(\int_0^rs^{n-1}g(u)ds\right)\xi'\,dr}=c_{n,k}\int_0^1{r^{n-k}(u')^k\xi'\,dr}<\infty.
\end{equation}

Finally, integrating by parts the left-hand side of \eqref{onweak}, we conclude that $u$ is a weak radial solution of \eqref{Eq:f}.
\end{proof}

\begin{proof}[Proof of Corollary  \ref{coro:weakC2}]
Let $u$ be an integral radial solution of \eqref{Eq:f}. By Lemma \ref{equiv}, $u$ is a weak radial solution of \eqref{Eq:f}, and by the Sobolev embedding in one dimension, $u$ is a continuous function of $r=\abs{x}\in [\delta,1]$ for every $\delta\in (0,1)$. Thus $u$ is continuous on $(0,1)$ and therefore so also is $s^{n-1}g(u)$. Now from \eqref{integraldef} we have 
\begin{equation*}\label{derivative}
u'=c_{n,k}^{-\frac{1}{k}}\left(\frac{\int_0^rs^{n-1}g(u)ds}{r^{n-k}}\right)^\frac{1}{k},\,r\in (0,1].
\end{equation*}

It follows directly that
\begin{equation*}\label{eq:derivative}
	u''=\frac{c_{n,k}^{-\frac{1}{k}}}{k}\left(\frac{\int_0^r{s^{n-1}g(u)\,ds}}{r^{n-k}}\right)^{\frac{1-k}{k}}\left(r^kg(u)+\frac{(k-n)\int_0^r{s^{n-1}g(u)\,ds}}{r^{n-k+1}}\right) ,\,r\in (0,1],
	\end{equation*}	
which concludes the proof.
\end{proof}

We claim that if $u$ is a $C^2$ radial function on $B_1$, then $u'\in C_{loc}^{0,1}(B_1)$. To prove our claim, we first observe that $u'\in C^0(B_1)\cap C^1(B_1\setminus\{0\})$ with $u'(0)=0$ and $\abs{u'(x)}\leq C_R\abs{x}$ in any open ball $B_R$ of radius $R>0$ contained in $B_1$. Now
from \eqref{Eq:f} and \eqref{Radial:Hess} we obtain, for $0<r<1$,
\begin{equation*}
kc_{n,k}\left(\frac{u'}{r}\right)^{k-1}\left(\left(\frac{n-k}{k}\right)\left(\frac{u'}{r}\right)+u''\right)=g(u),
\end{equation*}
from which we deduce that
\begin{equation*}\label{eq:d2u}
u''=\left(\frac{g(u)}{kc_{n,k}}\right) \left(\frac{u'}{r}\right)^{1-k}-\left(\frac{n-k}{k}\right)\left(\frac{u'}{r}\right),\,\forall r\in(0,1).
\end{equation*}

Thus, for any $0<R<1$, the function $u''$ is bounded on $B_R\setminus\{0\}$. Also for any $x,y\in B_1$ such that $R>\abs{y}>\abs{x}>0$, we have
\begin{equation*}
\begin{split}
\abs{u'(\abs{y})-u'(\abs{x})}&\leq\int_{\abs{x}}^{\abs{y}}\abs{u''(t)}dt\\
&\leq\sup_{\xi\in [\abs{x},\abs{y}]}\abs{u''(\xi)}(\abs{y}-\abs{x})\\
&\leq\sup_{z\in B_{R}\setminus\{0\}}\abs{u''(z)}\abs{y-x}.
\end{split}
\end{equation*}
We conclude that $u'\in C_{loc}^{0,1}(B_1)$. This will be used in the proof of Lemma \ref{lemm:esential} below.	
\medskip

Note that when $u$ and $\varphi$ are radial functions, the left-hand side of \eqref{eq:matchalQgen} can be written as 
	\begin{equation}\label{eq:radialQ}
	\Q_u(\varphi)=\int_{B_1}{kc_{n,k}\abs{x}^{1-k}\abs{\nabla u}^{k-1}\abs{\nabla\varphi}^2+g'(u)\varphi^2},
	\end{equation}
	which follows directly from \eqref{eq:DetaSDetaT}. Further, we may choose the test functions $\varphi$ to lie in $C^1_c(B_1)$. 
\begin{lemma}\label{lemm:esential}
	Let $u$ be any radial solution of \eqref{Eq:f}. Then
	\begin{equation}\label{eq:Qceta}
	\Q_u(u'\eta)=kc_{n,k}\int_{B_1}{\left(\frac{u'}{\abs{x}}\right)^{k+1}\left\lbrace\abs{\abs{x}\D\eta}^2+\frac{(k-1)\left(x,\D\eta^2\right)}{k+1}-\frac{(2n-k-1)\eta^2}{k+1}\right\rbrace},
	\end{equation}
	for every radially symmetric function $\eta\in (H_c^1\cap L_{loc}^{\infty})\left(B_1\right)$. 
\end{lemma}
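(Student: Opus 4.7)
My plan is to substitute $\varphi = u'\eta$ into \eqref{eq:radialQ}, pass to polar coordinates, and then eliminate $g'(u)$ via the ``linearized'' equation obtained by differentiating the radial form of \eqref{Eq:f}. Since $u$ and $\eta$ are both radial, $\D(u'\eta) = (u''\eta + u'\eta')x/\abs{x}$, so \eqref{eq:radialQ} reduces to
\begin{equation*}
\Q_u(u'\eta) = \abs{\partial B_1}\int_0^1\Bigl\{kc_{n,k}r^{n-k}(u')^{k-1}(u''\eta + u'\eta')^2 + r^{n-1}g'(u)(u')^2\eta^2\Bigr\}\,dr.
\end{equation*}
Starting from $c_{n,k}(r^{n-k}(u')^k)' = r^{n-1}g(u)$ and differentiating once more in $r$ yields
\begin{equation*}
r^{n-1}g'(u)u' \;=\; c_{n,k}\bigl[(r^{n-k}(u')^k)'\bigr]' - (n-1)c_{n,k}r^{-1}(r^{n-k}(u')^k)',
\end{equation*}
which I will multiply by $u'\eta^2$ and integrate against.

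I would then integrate by parts on the $[(r^{n-k}(u')^k)']'$ piece, pushing one derivative onto $u'\eta^2$. The boundary terms at $r=1$ vanish because $\eta$ has compact support; at $r=0$ they vanish because the claim proved immediately before the lemma gives $u'\in C^{0,1}_{loc}(B_1)$ with $u'(0)=0$, hence $u' = O(r)$. Expanding $(r^{n-k}(u')^k)'$ as $(n-k)r^{n-k-1}(u')^k + kr^{n-k}(u')^{k-1}u''$, the $g'(u)$ contribution splits into six pieces of types $(u')^{k-1}(u'')^2\eta^2$, $(u')^ku''\eta\eta'$, $(u')^ku''\eta^2$, $(u')^{k+1}\eta\eta'$, and $(u')^{k+1}\eta^2$. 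The $(u')^{k-1}(u'')^2\eta^2$ and $(u')^ku''\eta\eta'$ terms cancel exactly against those produced by expanding $(u''\eta + u'\eta')^2$, by design of the weight $kc_{n,k}$. The only surviving $u''$ contribution is a multiple of $r^{n-k-1}(u')^ku''\eta^2$, which a second integration by parts based on $(u')^ku'' = \tfrac{1}{k+1}\tfrac{d}{dr}(u')^{k+1}$ converts into pure $(u')^{k+1}$ terms.

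The main obstacle is simply the coefficient bookkeeping. The coefficient of the surviving $r^{n-k-1}(u')^ku''\eta^2$ term works out to $-c_{n,k}[(n-k) + k(n-1)] = -c_{n,k}[(k+1)n - 2k]$, and after the second integration by parts (whose boundary terms also vanish, since $r^{n-k-1}(u')^{k+1}\eta^2 = O(r^n)$ at the origin) it produces an $r^{n-k-1}(u')^{k+1}\eta\eta'$ and an $r^{n-k-2}(u')^{k+1}\eta^2$ piece. Combining with the $(n-k)$-weighted leftovers from the earlier step, the $\eta\eta'$ coefficient simplifies as $-2(n-k) + 2[(k+1)n-2k]/(k+1) = 2k(k-1)/(k+1)$, and the $\eta^2$ coefficient telescopes (after combining $-(n-1)(n-k)$ with $[(k+1)n-2k](n-k-1)/(k+1)$) to $-k(2n-k-1)/(k+1)$, while the $(u')^{k+1}(\eta')^2$ coefficient stays at $kc_{n,k}$. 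Reverting to an integral over $B_1$ via $r\eta\eta' = \tfrac12(x,\D\eta^2)$ and $r^2(\eta')^2 = \abs{\abs{x}\D\eta}^2$ then yields \eqref{eq:Qceta}. The hypothesis $\eta\in H^1_c(B_1)\cap L^\infty_{loc}(B_1)$ suffices throughout since at most one derivative is ever placed on $\eta$; all other regularity is provided by Corollary \ref{coro:weakC2} and the claim preceding the lemma.
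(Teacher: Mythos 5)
Your proposal is correct and follows essentially the same route as the paper: substitute $\varphi=u'\eta$ into \eqref{eq:radialQ}, eliminate $g'(u)$ by differentiating the radial form of \eqref{Eq:f}, and integrate by parts twice, with the cross terms in $u''$ cancelling against the expansion of $\abs{\D(u'\eta)}^2$ exactly as you describe (the paper merely phrases the differentiated equation via $\Delta(r\lambda_2^k)$ and first works with a general Lipschitz factor $\zeta$ before setting $\zeta=u'$). Your coefficient computations $2k(k-1)/(k+1)$ and $-k(2n-k-1)/(k+1)$ and your treatment of the boundary terms at the origin via $u'=O(r)$ match the paper's argument.
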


\begin{proof}

	Let $\eta\in H_c^1(B_1)\cap L_{loc}^\infty(B_1)$ and $\zeta\in C_{loc}^{0,1}(B_1)$ be radial functions. Then, by a standard density argument, we can take $\varphi=\zeta\eta\in H_c^1(B_1)\cap L_{loc}^\infty(B_1)$ in \eqref{eq:radialQ} to obtain
	\begin{equation}\label{eq:Qzeta}
	\Q_u(\zeta\eta)=\int_{B_1}{kc_{n,k}\abs{\frac{u'}{\abs{x}}}^{k-1}\abs{\D(\zeta\eta)}^2+g'(u)(\zeta\eta)^2}.
	\end{equation}
	
	Thus, as $u$ and $\zeta$ are radial functions, we get 
	\begin{equation}\label{eq:radialzetaeta}
	\abs{\frac{u'}{\abs{x}}}^{k-1}\abs{\D(\zeta\eta)}^2=\lambda_2^{k-1}\left(\zeta^2\abs{\nabla\eta}^2+\left(2\zeta\eta(\nabla\zeta,\nabla\eta)+\eta^2\abs{\nabla\zeta}^2\right)\right),
	\end{equation}
	where $\lambda_2=u'/{\abs{x}}.$
	
	From \eqref{Radial:Hess} and differentiating \eqref{Eq:f} with respect to $r$, we obtain
	\begin{equation*}
	\begin{split}
	c_{n,k}^{-1}g'(u)u'&=-\frac{n-1}{r^2}\left(\frac{u'^k}{r^{k-1}}\right)+\Delta\left(\frac{u'^k}{r^{k-1}}\right)\\
	&=-\frac{(n-1)\lambda_2^k}{r}+\Delta\left(r\lambda_2^k\right),\; r>0.
	\end{split}
	\end{equation*}
	
	Then, multiplying the latter equation by $\eta^2 u'$, integrating by parts and taking into account that $\lambda_2\in L_{loc}^\infty
(B_1)$, we have
\begin{equation*}
\begin{split}
c_{n,k}^{-1}\int_{B_1}g'(u)(u'\eta)^2&=-(n-1)\int_{B_1}\lambda_2^{k+1}\eta^2-\int_{B_1}\left(\nabla\left(r\lambda_2^{k}\right),\nabla(\eta^2 u')\right)\\
&=-\frac{k(2n-k-1)}{k+1}\int_{B_1}\lambda_2^{k+1}\eta^2+\frac{k(k-1)}{k+1}\int_{B_1}\lambda_2^{k+1}\left(x,\nabla\eta^2\right)\\
&-k\int_{B_1}\lambda_2^{k-1}\left(\abs{\nabla u'}^2\eta^2+2u'\eta(\nabla u',\nabla\eta)\right).
\end{split}
\end{equation*}

	Finally, using $\zeta=u'$ in \eqref{eq:Qzeta}, \eqref{eq:radialzetaeta} and the previous equation, we obtain \eqref{eq:Qceta}, which completes the proof. 
\end{proof}

\begin{proof}[Proof of Corollary  \ref{coro:esential}]
If $u$ is a semistable radial solution of \eqref{Eq:f}, then from \eqref{eq:Qceta} it follows that $\Q_u(u'\eta)\geq 0$ for every radially symmetric function $\eta\in (H_c^1\cap L_{loc}^{\infty})\left(B_1\right)$, that is, \eqref{goeto0} holds.

For the converse, let $0<\epsilon<1$, let $\varphi\in C_c^1\left(B_1\right)$ be a radially symmetric function and set $\varphi_{\epsilon,\sigma}=\varphi\varsigma_{\epsilon,\sigma}/u'$, where
\begin{equation}\label{var_eps_sig}
\varsigma_{\epsilon,\sigma}(r):=\begin{cases}
\begin{cases}
0&\mbox{If }0\leq r<\epsilon^2,\\
1-\xi\left(\frac{\log r}{\log\epsilon}\right)&\mbox{If }\epsilon^2\leq r<\epsilon,\\
1&\mbox{If }r\geq \epsilon,
\end{cases}&\mbox{If }n=2\sigma,\\
\begin{cases}
0&\mbox{If }0\leq r<\epsilon,\\
\xi\left(\frac{r}{\epsilon}\right) &\mbox{If }\epsilon\leq r<2\epsilon,\\
1&\mbox{If }r\geq 2\epsilon.
\end{cases}&\mbox{If }n\geq 2\sigma+1,
\end{cases}
\end{equation}
where $\sigma\in \NN$ and $\xi(t)=2\left(1-t\right)^2\left(\frac52-t\right)$.

By applying \eqref{eq:Qzeta} with $\eta = \varphi_{\epsilon,\sigma}$, we conclude that
\begin{equation}\label{ineq:usemvarep}
\begin{split}
0 \le \Q_u(u'\varphi_{\epsilon,\sigma})&=\int_{B_1}{\left\lbrace kc_{n,k}\lambda_2^{k-1}\abs{\D (u'\varphi_{\epsilon,\sigma})}+g'(u)(u'\varphi_{\epsilon,\sigma})^2\right\rbrace }\\
&=\int_{B_1}{\left\lbrace kc_{n,k}\lambda_2^{k-1}\abs{\D (\varphi\varsigma_{\epsilon,\sigma})}+g'(u)(\varphi\varsigma_{\epsilon,\sigma})^2\right\rbrace }\\
&=\int_{B_1}{\left\lbrace\left(kc_{n,k}\lambda_2^{k-1}\abs{\D\varphi}+g'(u)\varphi^2\right)\varsigma_{\epsilon,\sigma}^2\right\rbrace }+\\
&+kc_{n,k}\int_{B_1}{\left\lbrace \lambda_2^{k-1}\left(2\varphi\varphi'\varsigma_{\epsilon,\sigma}\varsigma_{\epsilon,\sigma}'+\varphi^2\varsigma_{\epsilon,\sigma}'^2\right) \right\rbrace }.
\end{split}
\end{equation}	

Now let
\begin{equation}\label{eq:Iepsig}
\begin{split}
I_{\epsilon,\sigma}&:=\fint_{B_1}{\left\lbrace \lambda_2^{k-1}\left(2\varphi\varphi'\varsigma_{\epsilon,\sigma}\varsigma_{\epsilon,\sigma}'+\varphi^2\varsigma_{\epsilon,\sigma}'^2\right) \right\rbrace }\\
&=\int_0^1{\left\lbrace{(u')}^{k-1}\left(2\varphi\varphi'\varsigma_{\epsilon,\sigma}\varsigma_{\epsilon,\sigma}'+\varphi^2\varsigma_{\epsilon,\sigma}'^2\right) \right\rbrace r^{n-k}dr}.
\end{split}
\end{equation}

\begin{itemize}
	\item If $u\in C^2(B_1)$ or $u\in\XX$ and $n<2k$, we take $\sigma=1$ in \eqref{var_eps_sig} (Note that, if $n<2k$ and $u\in\XX$, then $u'(0)=0$ by \cite[Lemma 3.3]{NaSa19}). Then 
	\begin{equation}\label{ineq:Iepsigur0}
	\begin{split}
	\abs{I_{\epsilon,1}}&\leq
	\begin{cases}
	\frac{\epsilon^{k-1}}{-\log\epsilon}\int_{\epsilon^2}^{\epsilon}{\left(16\abs{\varphi\varphi'}r+\left(\frac{64\varphi^2}{-\log\epsilon}\right)\right)r^{n-k-2}dr}&\mbox{If }n=2,\\
	\epsilon^{k-3}\int_{\epsilon}^{2\epsilon}{\left(16\abs{\varphi\varphi'}\epsilon+64\varphi^2\right)r^{n-k}dr}&\mbox{If }n\geq 3,
	\end{cases}\\
	&\leq \epsilon^{n-2}o(1)=o(1),  
	\end{split}
	\end{equation}
	where $I_{\epsilon,1}$ is given by \eqref{eq:Iepsig}.
	
	\item If $u\in\XX$ and $n\geq 2k$, we take $\sigma=k$ in \eqref{var_eps_sig}. Then 
	\begin{equation}\label{ineq:IepsiguW1k}
	\begin{split}
	\abs{I_{\epsilon,k}}&\leq
	\begin{cases}
	\frac{1}{-\log\epsilon}\int_{\epsilon^2}^{\epsilon}{r^{n-k}(u')^{k-1}\left(16\abs{\varphi\varphi'}r^{-1}+\left(\frac{64\varphi^2r^{-2}}{-\log\epsilon}\right)\right)dr}&\mbox{If }n=2k,\\
	\epsilon^{-2}\int_{\epsilon}^{2\epsilon}{r^{n-k}(u')^{k-1}\left(16\abs{\varphi\varphi'}\epsilon+64\varphi^2\right)dr}&\mbox{If }n\geq 2k+1,
	\end{cases}\\
	&\leq\left(\begin{cases}
	\frac{1}{-\log\epsilon}\left(\int_{\epsilon^2}^{\epsilon}{r^{n-k}(u')^{k+1}dr}\right)^{\frac{k-1}{k+1}}&\mbox{If }n=2k,\\
	\epsilon^{-2}\left(\int_{\epsilon}^{2\epsilon}{r^{n-k}(u')^{k+1}dr}\right)^{\frac{k-1}{k+1}}&\mbox{If }n\geq 2k+1,
	\end{cases}\right)\times\\
	&\times\left(\begin{cases}
	16\left(\int_{\epsilon^2}^{\epsilon}{r^{n-k-\frac{k+1}{2}}\abs{\varphi\varphi'}^{\frac{k+1}{2}}dr}\right)^{\frac{2}{k+1}}&\mbox{If }n=2k,\\
	16\epsilon\left(\int_{\epsilon}^{2\epsilon}{r^{n-k-\frac{k+1}{2}}\abs{\varphi\varphi'}^{\frac{k+1}{2}}dr}\right)^{\frac{2}{k+1}}&\mbox{If }n\geq 2k+1,
	\end{cases}\right.+\\
		&+\left.\begin{cases}
	\frac{64}{-\log\epsilon}\left(\int_{\epsilon^2}^{\epsilon}{r^{n-2k-1}\abs{\varphi}^{k+1}dr}\right)^{\frac{2}{k+1}}&\mbox{If }n=2k,\\
	64\left(\int_{\epsilon}^{2\epsilon}{r^{n-2k-1}\abs{\varphi}^{k+1}dr}\right)^{\frac{2}{k+1}}&\mbox{If }n\geq 2k+1.
	\end{cases}\right)\\
	&\leq \epsilon^{n-2k}o(1)=o(1).
	\end{split}
	\end{equation}
\end{itemize}

Hence, from \eqref{ineq:usemvarep}-\eqref{ineq:IepsiguW1k}, for $\sigma\in\{1,k\}$, we have 
\[
\int_{B_1}{\left(kc_{n,k}\lambda_2^{k-1}\abs{\D\varphi}+g'(u)\varphi^2\right)\varsigma_{\epsilon,\sigma}^2}+ o(1) \geq 0.
\]
Letting $\epsilon\to0$, we obtain, using dominated convergence theorem and Theorem \ref{theo:esential}, that $u$ is semistable, as desired.	
\end{proof}

\begin{proof}[Proof of \cref{th_1,thm:est_g,th_2}]
The proofs of \cref{th_1,thm:est_g,th_2} follow by direct applications of Corollary \ref{coro:esential} (which is fundamental in our charactarization of semistable solutions) and the corresponding results in \cite[Theorems 2.5, 2.6 and 2.8]{NaSa19}.
\end{proof}

\section{A family of semistable radial solutions}
In this section we provide, for $n\geq 2k+8$, a large family of semistable radially increasing unbounded solutions of problems of the type \eqref{Eq:f} on $B_1\setminus\{0\}$. This family is similar to that constructed in \cite{NaSa19} for problem \eqref{Eq:P}. 

We start with the following statement, which was proved in \cite{NaSa19}.
\begin{theorem}\label{thm:familysemistable}
	Let $h\in (C^1\cap L^1)(0,1]$ be a nonnegative function and consider
	\[
	V(r)=r^{(k+1)\left(\delta_{n,k}-2\right)+2}\left(1+\int_0^r{h(s)\,ds}\right),\;\; \forall r\in (0,1],
	\]
	where
%
	\begin{equation}\label{eq:deltank}
	\begin{split}
	\delta_{n,k}&=\frac{-(k+1)n+2\sqrt{2(k+1)n-4k}+2k^2+6k}{(k+1)^2}.\\
	\end{split}
	\end{equation}	

	Now define 
	\[
	u'(r):=r^{\frac{k-1}{k+1}}(V(r))^{\frac{1}{k+1}},\;\; \forall r\in (0,1].
	\]	
	
	Then, for all $n\geq 2k+8$, $u$ is a semistable radially increasing unbounded\, $\XX$-solution of a problem of the type \eqref{Eq:P} on $B_1\setminus\{0\}$ with $g\geq0$, where $u$ is any function with radial derivative $u'$.
\end{theorem}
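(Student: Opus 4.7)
The plan is to split the argument into four tasks: identify the nonlinearity $g$ and check $g\ge 0$; verify $u\in\XX$; check unboundedness; and prove semistability using \cref{coro:esential}. I expect the last step to be the main obstacle.

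Since $u'>0$ on $(0,1]$, the function $u$ is strictly increasing, so I would define $g$ implicitly by $g(u(r)):=c_{n,k}r^{1-n}\bigl(r^{n-k}(u')^k\bigr)'$, which makes $u$ a classical radial solution of \eqref{Eq:P} by construction. The identity $(u')^{k+1}=r^{k-1}V$ rewrites $r^{n-k}(u')^k$ as $r^{n-2k+k\delta_{n,k}}H(r)^{k/(k+1)}$ with $H(r):=1+\int_0^r h$; a short algebraic check from \eqref{eq:deltank} yields $n-2k+k\delta_{n,k}>0$ for $n\ge 2k+8$, so together with $h\ge 0$ this factor is nondecreasing, giving $g\ge 0$. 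Membership $u\in\XX$ then reduces to integrability of the explicit powers $r^{n-k}(u')^{k+1}$ and $r^{n-k}|u|^{k+1}$, both controlled by the positivity of $n-2k+(k+1)\delta_{n,k}=(4k+2\sqrt{2(k+1)n-4k})/(k+1)$. Unboundedness is immediate: for $n\ge 2k+8$ the exponent $\delta_{n,k}\le 0$, so $u'(r)\sim r^{\delta_{n,k}-1}$ is not integrable at the origin.

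The heart of the proof is the semistability. By \cref{coro:esential} it suffices to verify \eqref{goeto0} on radial $\eta$. Writing $(u'/|x|)^{k+1}=V/|x|^2$, integrating by parts the cross term $(x,\nabla\eta^2)$, and using $V=r^{(k+1)\delta_{n,k}-2k}H$, the one-dimensional form of \eqref{goeto0} becomes
\begin{equation*}
\int_0^1 r^\beta H\eta_r^2\,dr\;\ge\;\Bigl(\frac{\beta-1}{2}\Bigr)^2\int_0^1 r^{\beta-2}H\eta^2\,dr+\frac{k-1}{k+1}\int_0^1 r^{\beta-1}h\eta^2\,dr,
\end{equation*}
where $\beta-1=(k+1)\delta_{n,k}+n-2k-2$. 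The algebraic identity that makes this work---and that dictates the very choice \eqref{eq:deltank} of $\delta_{n,k}$---is that the coefficient $((\beta-1)/2)^2$ on the right coincides with the Hardy-type constant $(k-1)\delta_{n,k}+n-2k+1$ produced by the integration by parts; both equal $\bigl((k-1)+\sqrt{2(k+1)n-4k}\bigr)^2/(k+1)^2$. I would then perform the critical-weight substitution $\eta=r^{-(\beta-1)/2}\zeta$: a direct computation (integrating by parts $\int H(\zeta^2)_r$) collapses the left-hand side minus the Hardy term to $\int_0^1 rH\zeta_r^2\,dr+\frac{\beta-1}{2}\int_0^1 h\zeta^2\,dr$, reducing the whole problem to
\begin{equation*}
\int_0^1 rH\zeta_r^2\,dr+\Bigl(\frac{\beta-1}{2}-\frac{k-1}{k+1}\Bigr)\int_0^1 h\zeta^2\,dr\;\ge\;0,
\end{equation*}
which is evident because the bracket equals $\sqrt{2(k+1)n-4k}/(k+1)>0$ and $h\ge 0$.

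The main remaining difficulty is that \cref{coro:esential} admits test functions $\eta\in H^1_c(B_1)\cap L^\infty_{\mathrm{loc}}(B_1)$ that need not vanish at the origin, so the integration by parts and the Hardy substitution produce boundary contributions at $r=0$ that must be shown to vanish. These are controlled by the inequality $n-2-2k+(k+1)\delta_{n,k}>0$, again a direct consequence of the hypothesis $n\ge 2k+8$ via \eqref{eq:deltank}; a standard cutoff/density argument then extends the inequality from $\eta$ compactly supported in $(0,1)$ to the full admissible class.
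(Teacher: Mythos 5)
Your proposal is correct, and its first three steps (defining $g$ via the equation and checking $g\geq 0$ from the monotonicity of $r^{n-2k+k\delta_{n,k}}H^{k/(k+1)}$, verifying $u\in\XX$, and deducing unboundedness from $\delta_{n,k}\leq 0$) match the paper's proof almost verbatim. Where you genuinely diverge is in establishing the semistability inequality \eqref{goeto0}. The paper isolates a reusable weighted Hardy-type inequality, Lemma \ref{lemm:newhardy}, proved by expanding $\int_0^1 r^{n-3}V\left(\alpha\eta-t(r\eta'+\beta\eta)\right)^2dr\geq 0$ in $t$, integrating by parts, and requiring a nonpositive discriminant; it then applies it with $\alpha=2\sqrt{2(k+1)n-4k}/(k+1)$, $\beta=(k-1)/(k+1)$, using the structural hypotheses \eqref{ineq:Vcond1}--\eqref{eq:Vcond2} on $V$ (which in your notation are exactly $h\geq 0$ and $\beta-1=n+(k+1)(\delta_{n,k}-2)\geq 6>0$, the paper's \eqref{ineq:nlamb}). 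You instead prove the same inequality directly by the ground-state substitution $\eta=r^{-(\beta-1)/2}\zeta$. Your algebra checks out: $\frac{\beta-1}{2}=\frac{k-1+\sqrt{2(k+1)n-4k}}{k+1}$, the constant $n-2k+1+(k-1)\delta_{n,k}$ produced by integrating the cross term by parts does equal $\left(\frac{\beta-1}{2}\right)^2$, and the residual coefficient $\frac{\beta-1}{2}-\frac{k-1}{k+1}=\frac{\sqrt{2(k+1)n-4k}}{k+1}$ is positive, so the reduced inequality is evident from $h\geq 0$; the boundary contributions at the origin vanish because $\beta-1\geq 6$ and $\eta\in L^\infty_{loc}$. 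The two routes are equivalent in content (both are standard proofs of the same weighted Hardy inequality), but the paper's buys a self-contained lemma with clean hypotheses on $V$ that also yields the classical Hardy inequality as a special case, whereas yours makes visible why \eqref{eq:deltank} is the precise exponent for which the quadratic form degenerates, at the cost of carrying the cutoff/density bookkeeping for test functions not vanishing at the origin inside the main argument.
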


We prove the above statement for problem \eqref{Eq:f}. To this end, we need the following lemma

\begin{lemma}\label{lemm:newhardy}
	Let $\alpha, \beta\in\RR$, and let $V\in C^1\left(B_1\setminus\{0\}\right)$ be a nonnegative radial function such that
	\begin{equation}\label{ineq:Vcond1}
	\alpha\left(rV'+(n-2\beta-\alpha-2)V\right)\geq 0,\,\forall r\in(0,1].
	\end{equation}
	
	Further, assume that 
	\begin{equation}\label{eq:Vcond2}
	\lim_{r\rightarrow 0}r^{n-2}V=0.
	\end{equation}
	
	Then, for every radially symmetric function $\eta\in C^1_c(B_1)$, we have
	\begin{equation}\label{ineq:hardygen}
	\int_0^1{r^{n-3}V\left(\left(r\eta'+\beta\eta\right)^2-\frac{\alpha^2\eta^2}4\right)\,dr}\geq 0.
	\end{equation}	
	
\end{lemma}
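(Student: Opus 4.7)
The plan is to reduce \eqref{ineq:hardygen} to a sum of two manifestly nonnegative integrals by means of a single weighted substitution followed by one integration by parts; hypothesis \eqref{ineq:Vcond1} will then serve as a pointwise supersolution-type condition, supplying exactly the sign needed for the cross term that arises.

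Concretely, I would set $\psi(r):=r^{\beta+\alpha/2}\eta(r)$. A direct computation gives $r\eta'+\beta\eta=r^{-\beta-\alpha/2}\bigl(r\psi'-(\alpha/2)\psi\bigr)$, and the algebraic identity $(r\psi'-(\alpha/2)\psi)^2-(\alpha\psi/2)^2=r^2(\psi')^2-\alpha r\psi\psi'$ then yields
\[
(r\eta'+\beta\eta)^2-\frac{\alpha^2\eta^2}{4}=r^{1-2\beta-\alpha}\!\left(r(\psi')^2-\frac{\alpha}{2}(\psi^2)'\right).
\]
Multiplying by $r^{n-3}V$ and splitting, the left-hand side of \eqref{ineq:hardygen} becomes
\[
\int_0^1 r^{n-1-2\beta-\alpha}V(\psi')^2\,dr-\frac{\alpha}{2}\int_0^1 r^{n-2-2\beta-\alpha}V(\psi^2)'\,dr.
\]

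Next I would integrate the cross term by parts on $[\varepsilon,1]$ and let $\varepsilon\to 0^+$. Writing $c:=n-2-2\beta-\alpha$, the boundary contribution is $\bigl[(\alpha/2)\,r^cV\psi^2\bigr]_0^1$; this vanishes at $r=1$ because $\eta\in C_c^1(B_1)$, and at $r=0$ because the cancellation $c+2\beta+\alpha=n-2$ reduces it to $(\alpha/2)r^{n-2}V\eta^2$, which tends to $0$ by \eqref{eq:Vcond2} together with the boundedness of $\eta$ near the origin. Since $(r^cV)'=r^{c-1}\bigl[rV'+(n-2\beta-\alpha-2)V\bigr]$, after the integration by parts \eqref{ineq:hardygen} becomes
\[
\int_0^1 r^{n-1-2\beta-\alpha}V(\psi')^2\,dr+\frac{\alpha}{2}\int_0^1 r^{n-3-2\beta-\alpha}\bigl[rV'+(n-2\beta-\alpha-2)V\bigr]\psi^2\,dr.
\]

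The first integral is nonnegative because $V\ge 0$, while the second is nonnegative by \eqref{ineq:Vcond1}, which asserts exactly that $\alpha\bigl(rV'+(n-2\beta-\alpha-2)V\bigr)\ge 0$ pointwise (the case $\alpha=0$ being trivial). The main technical point I anticipate is the justification of the boundary vanishing at $r=0$ and the associated integration by parts when $\beta+\alpha/2<0$, in which case $\psi$ is singular at the origin; this is precisely where \eqref{eq:Vcond2} is indispensable, since the clean cancellation $r^cV\psi^2=r^{n-2}V\eta^2$ collapses all the singular powers of $r$ introduced by the substitution into the exact quantity that \eqref{eq:Vcond2} controls.
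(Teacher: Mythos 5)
Your proof is correct, but it takes a genuinely different route from the paper's. The paper argues via a quadratic form in an auxiliary parameter: it starts from $\int_0^1 r^{n-3}V\left(\alpha\eta-t\left(r\eta'+\beta\eta\right)\right)^2dr\geq 0$ for all $t\in\RR$, integrates the cross term by parts (using \eqref{eq:Vcond2} to kill the boundary contribution, exactly as you do), and then extracts \eqref{ineq:hardygen} from the nonnegativity of the discriminant together with the pointwise bound $\alpha\left(rV'+(n-2\beta-2)V\right)\geq\alpha^2V$ supplied by \eqref{ineq:Vcond1}. You instead perform the ground-state substitution $\psi=r^{\beta+\alpha/2}\eta$, which completes the square pointwise and, after the same integration by parts, exhibits the left-hand side of \eqref{ineq:hardygen} as the sum of the manifestly nonnegative remainder $\int_0^1 r^{n-1-2\beta-\alpha}V(\psi')^2dr$ and a term whose integrand is nonnegative directly by \eqref{ineq:Vcond1}. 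Your version is slightly more informative (it produces an explicit nonnegative deficit rather than just the inequality, and sidesteps the implicit division by $\int_0^1 r^{n-3}V\eta^2dr$ in the discriminant step), at the cost of introducing the possibly singular weight $r^{\beta+\alpha/2}$; as you correctly observe, the identity $r^{n-2-2\beta-\alpha}V\psi^2=r^{n-2}V\eta^2$ makes the boundary term at the origin exactly the quantity controlled by \eqref{eq:Vcond2}, so no harm results. The only point worth making explicit is that, when passing from $[\varepsilon,1]$ to $(0,1]$, both of your surviving integrals have nonnegative integrands, so monotone convergence legitimises the limit even if one of them were infinite.
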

\begin{remark}
	If $\alpha=n-2$, $\beta=0$, $V\equiv 1$ and $n\geq 3$ in Lemma \ref{lemm:newhardy}, then we obtain a Hardy-type inequality on the unit ball.
\end{remark}

\begin{proof}
Let $\eta\in C_c^1(B_1)$ be a radially symmetric function. Then
\begin{equation*}
\begin{gathered}
\int_0^1{r^{n-3}V\left (\alpha\eta-t\left(r\eta'+\beta\eta\right)\right )^2\,dr}\geq 0
\end{gathered}
\end{equation*}
for all $t\in\RR$. Extending the above expression, we get the following quadratic inequality for $t$:

\begin{equation*}
\alpha^2\int_0^1{r^{n-3}V\eta^2\,dr}-2\alpha t\int_0^1{r^{n-3}V\left(r\eta\eta'+\beta\eta^2\right)\,dr}+t^2\int_0^1{r^{n-3}V\left(r\eta'+\beta\eta\right)^2\,dr}\geq 0.
\end{equation*}

Integrating by parts above and using \eqref{eq:Vcond2}, we obtain
\begin{equation*}
\begin{gathered}
\alpha^2\int_0^1{r^{n-3}V\eta^2\,dr}+\alpha t\int_0^1{r^{n-3}\left(rV'+\left(n-2\beta-2\right)V\right)\eta^2\,dr}+t^2\int_0^1{r^{n-3}V\left(r\eta'+\beta\eta\right)^2\,dr}\geq 0.
\end{gathered}
\end{equation*}

Thus, the above quadratic inequality is equivalent to
\begin{equation*}
\begin{split}
\left(\alpha\int_0^1{r^{n-3}\left(rV'+\left(n-2\beta-2\right)V\right)\eta^2\,dr}\right)^2 &\leq 4\alpha^2\left(\int_0^1{r^{n-3}V\eta^2\,dr}\right)\times\\
&\times\left(\int_0^1{r^{n-3}V\left(r\eta'+\beta\eta\right)^2\,dr}\right).
\end{split}
\end{equation*}

From this and \eqref{ineq:Vcond1}, we get the desired inequality \eqref{ineq:hardygen}. The proof is now complete. 
\end{proof}

\begin{proof}[Proof of Theorem \ref{thm:familysemistable}]
The proof presented here follows the same lines of the proof of \cite[Theorem 4.1]{NaSa19}, but except for some minor changes due to the condition \eqref{eq:Vcond2} that gives a control of $V$ near 0.
	
	First, since $n\geq 2k+8$, from \eqref{eq:deltank} we obtain
	
	\begin{equation}\label{ineq:nlamb}
	n+(k+1)\left(\delta_{n,k}-2\right)=\frac{2\sqrt{2(k+1)n-4k}+4k}{k+1}-2\geq 6,
	\end{equation}
	which shows that $V\in L^1(B_1)$ and hence that $u\in\XX$.
%

	Thus, by the definition of $u'$, we have
	\begin{equation}\label{eq:deru}
	u'(r)=r^{\delta_{n,k}-1}\left(1+\int_0^r{h(s)\,ds}\right)^{\frac{1}{k+1}}\geq r^{\delta_{n,k}-1},\;\forall r\in (0,1].
	\end{equation}	
	
	We now rewrite \eqref{eq:deltank} as
	\begin{equation*}
	\begin{split}
	\delta_{n,k}
	&=-\frac{\left (\sqrt{2(k+1)n-4k}+2k\right )\left (n-2k-8\right )}{(k+1)(\sqrt{2(k+1)n-4k}+2k+4)}.\\
	\end{split}
	\end{equation*}
	
	Then
	\[
	u(1)-u(r)\geq\begin{cases}
	-\log r, &\text{if }\delta_{n,k}=0\Longleftrightarrow n=2k+8,\\
	\frac{1-r^{\delta_{n,k}}}{\delta_{n,k}},&\text{if }\delta_{n,k}<0\Longleftrightarrow n>2k+8.	
	\end{cases}
	\]
	
	Therefore 
	\[\lim_{r\to 0}{u(r)}=-\infty.\]
	
	Thus, from \eqref{eq:deru} and \eqref{Eq:f} in their radial form, we obtain
	\begin{equation*}\label{eq:SkFIN}
	\begin{split}
	c_{n,k}^{-1}S_k(D^2u)&=r^{1-n}\left (r^{n-k}(u')^k\right )'=r^{k(\delta_{n,k}-2)}\left(1+\int_0^r{h(s)\,ds}\right)^{\frac{k}{k+1}}\times\\
	&\times\left(n+k(\delta_{n,k}-2)+\frac{krh(r)}{(k+1)\left(1+\int_0^r{h(s)\,ds}\right)}\right).	
	\end{split}
	\end{equation*}
	
	Next, since $h\in C^1(0,1]$, we conclude that $u'\in C^2(0,1]$. Thus $0<S_k(D^2u)\in C^1(\overline{B_1}\setminus\{0\})$ (Note that $n+k(\delta_{n,k}-2)=8-\delta_{n,k}\geq 8$ by \eqref{ineq:nlamb}). Hence, taking a nonnegative function $g\in C^1$ such that $g(s) = c_{n,k}^{-1}S_k(D^2u(u^{-1}(s)))$, for $s\in(-\infty,u(1)]$, we conclude that $u$ is a solution of a problem of type \eqref{Eq:f} on $B_1\setminus\{0\}$.
	
	It remains to prove the semistability of $u$. First, from the definition of $V$ and \eqref{ineq:nlamb}, it is easily seen that 
	\begin{equation}\label{eq:Vcond2proof}
	\begin{split}
	r^{n-2}V&=r^{n+(k+1)\left(\delta_{n,k}-2\right)}\left(1+\int_0^r{h(s)\,ds}\right)\rightarrow 0\; \mbox{ as}\;\; r\rightarrow 0,
	\end{split}
	\end{equation}
	and 
	\begin{equation}\label{eq:Vcond1proof}
	rV'(r)-\left((k+1)\left(\delta_{n,k}-2\right)+2\right) V(r)=r^{(k+1)\left(\delta_{n,k}-2\right)+3}h(r)\geq 0,\, \forall r\in (0,1].
	\end{equation}
	
	Next define
	\begin{equation}\label{eq:examalpbet}
	\alpha=\frac{2\sqrt{2(k+1)n-4k}}{k+1}\mbox{ and } \beta=\frac{k-1}{k+1}.
	\end{equation}
	
	Then we can write the left-hand side of \eqref{ineq:nlamb} in the form
	
	\begin{equation*}
	n+(k+1)\left(\delta_{n,k}-2\right)=\alpha+2\beta.
	\end{equation*}
		
	Since $n\geq2k+8$, we have $\alpha\geq4(k+2)/(k+1)>0$. Then from \eqref{eq:Vcond1proof}, \eqref{eq:examalpbet} and the last equality, we obtain
	
	
	\begin{equation}\label{eq:Vcond0proof}
	\begin{split}
	\alpha\left(rV'+(n-2\beta-\alpha-2)V\right)&\geq \alpha\left(n+(k+1)\left(\delta_{n,k}-2\right)-2\beta-\alpha\right)V=0.\\
	\end{split}
	\end{equation}
	
	Now, by a simple calculation, we have
	\begin{equation*}
	\frac{\alpha^2}{4}-\beta^2=\frac{2n-k-1}{k+1}.
	\end{equation*}
	
	Finally, from the above, \eqref{ineq:Vcond1}, \eqref{eq:Vcond2proof}, \eqref{eq:Vcond0proof}, Lemma \ref{lemm:newhardy} and Corollary \ref{coro:esential}, we conclude that $u$ is semistable. The proof of the theorem is now complete.
\end{proof}

\begin{proposition}\label{prop:inequrr}
Let $\{r_m\}\subset(0, 1]$, $\{M_m\}\subset\RR^{+}$ be two sequences with $r_m\downarrow0$. Then, for $n\geq 2k+8$, there exists a $u\in\XX$, which is a semistable radially increasing unbounded solution of a problem of type \eqref{Eq:f} in $B_1\setminus\{0\}$, satisfying

\begin{equation*}\label{ineq:famsemurr}
\abs{u''(r_m)}\geq M_m,\;\;\forall m\in\NN.
\end{equation*}
\end{proposition}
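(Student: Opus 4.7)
My plan is to invoke Theorem~\ref{thm:familysemistable} with an explicit, carefully chosen weight $h\in(C^1\cap L^1)(0,1]$, $h\ge 0$. Writing $\Phi(r):=1+\int_0^r h(s)\,ds$, the formula $u'(r)=r^{\delta_{n,k}-1}\Phi(r)^{1/(k+1)}$ differentiates to
\[
u''(r)=(\delta_{n,k}-1)\,r^{\delta_{n,k}-2}\,\Phi(r)^{1/(k+1)}+\frac{r^{\delta_{n,k}-1}\,h(r)}{(k+1)\,\Phi(r)^{k/(k+1)}}.
\]
Since $\delta_{n,k}\le 0$ in the regime $n\ge 2k+8$, the first summand is nonpositive and, for each fixed $r=r_m$, represents a quantity controlled once $\|h\|_{L^1}$ is bounded. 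The second summand is linear in the pointwise value $h(r_m)$ and can be made arbitrarily large by inflating $h$ only near $r_m$. So the task reduces to building $h$ with prescribed pointwise spikes at each $r_m$ while keeping $h$ nonnegative, $C^1$, and $L^1$.

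To realize this, I would fix smooth nonnegative bump functions $\phi_m\in C_c^\infty((0,1))$ with $\phi_m(r_m)=1$, $0\le\phi_m\le 1$, and $\operatorname{supp}\phi_m\subset(r_m-\varepsilon_m,r_m+\varepsilon_m)$, where $\varepsilon_m\in(0,r_m/2)$ is chosen so small that the supports are pairwise disjoint and accumulate only at $r=0$. Set $h:=\sum_{m\ge 1}a_m\phi_m$ with positive amplitudes $a_m$ to be chosen. Because the sum is locally finite on every compact subset of $(0,1]$, we have $h\in C^1(0,1]$, and requiring $a_m\varepsilon_m\le 2^{-m-1}$ gives $\int_0^1 h\le 1$, so $h\in L^1(0,1]$ and $\Phi$ stays in $[1,2]$ throughout. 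Since $h(r_m)=a_m$ and $\Phi(r_m)\in[1,2]$, the above identity yields
\[
|u''(r_m)|\ge\frac{a_m\,r_m^{\delta_{n,k}-1}}{(k+1)\,2^{k/(k+1)}}-|\delta_{n,k}-1|\,r_m^{\delta_{n,k}-2}\,2^{1/(k+1)},
\]
so I would pick each $a_m$ large enough (depending on $M_m$, $r_m$, $n$, $k$) to make the right-hand side exceed $M_m$, and only then shrink $\varepsilon_m$ to restore $a_m\varepsilon_m\le 2^{-m-1}$. Theorem~\ref{thm:familysemistable} then produces the desired semistable, radially increasing, unbounded $u\in\XX$ solving a problem of type~\eqref{Eq:f} on $B_1\setminus\{0\}$.

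The main obstacle is the apparent tension between the two competing requirements on $h$: the spike heights $h(r_m)=a_m$ must be unbounded while $\|h\|_{L^1}$ must remain finite and $h$ must lie in $C^1$. This tension is only apparent, because the widths $\varepsilon_m$ of the bumps are independent of their heights and can be driven to zero as fast as needed; the single genuine subtlety is checking $h\in C^1(0,1]$ despite the accumulation of bump supports at the origin, which is immediate from local finiteness of the sum on each $[\delta,1]$.
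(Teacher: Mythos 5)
Your proposal is correct, and in substance it follows the same route as the paper: invoke Theorem~\ref{thm:familysemistable} with a weight $h$ that spikes at the points $r_m$. The difference is one of self-containment: the paper disposes of Proposition~\ref{prop:inequrr} in one line by citing Theorem~\ref{thm:familysemistable}, Corollary~\ref{coro:esential} and the corresponding construction in \cite[Proposition 4.3]{NaSa19}, whereas you actually carry out the construction of $h$. Your computation of $u''$ from $u'=r^{\delta_{n,k}-1}\Phi^{1/(k+1)}$ is right, the sign observation $\delta_{n,k}\le 0$ for $n\ge 2k+8$ is confirmed by the factored form of $\delta_{n,k}$ given in the proof of Theorem~\ref{thm:familysemistable}, and the decoupling of bump heights $a_m$ from bump widths $\varepsilon_m$ (fix $a_m$ first to beat $M_m$ plus the controlled nonpositive term, then shrink $\varepsilon_m$ to keep $\Vert h\Vert_{L^1}\le 1$ and the supports disjoint) is exactly the point that makes the argument work; local finiteness of the supports on each $[\delta,1]$ gives $h\in C^1(0,1]$. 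The only cosmetic slip is requiring $\phi_m\in C_c^\infty((0,1))$: since $r_m\in(0,1]$ may equal $1$ for finitely many $m$, you should only ask that $\phi_m$ be smooth with support in $(0,1]$, which is all that $h\in(C^1\cap L^1)(0,1]$ demands. What your version buys is a proof readable without \cite{NaSa19}; what the paper's version buys is brevity and consistency with its companion statements (Proposition~\ref{prop:inequrrr} and Corollaries~\ref{cor:inequrr}, \ref{cor:inequrrr}), which are all dispatched by the same citation.
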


\begin{corollary}\label{cor:inequrr}
Let $n\geq 2k+8$. Then there is no function $\varphi:(0, 1]\to\RR^{+}$ having the following property: for every $u\in\XX$ semistable radially
increasing solution of a problem of type \eqref{Eq:f} on $B_1\setminus\{0\}$ with $g\geq 0$, there exist $C > 0$ and $\epsilon\in(0, 1]$ such that $\abs{u''(r)}\leq C\varphi(r)$ for every $r\in(0,\epsilon]$.
\end{corollary}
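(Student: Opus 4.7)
The plan is to deduce the corollary from Proposition \ref{prop:inequrr} by a diagonal argument. Suppose, for contradiction, that a function $\varphi:(0,1]\to\RR^+$ with the stated property exists. The strategy is to manufacture sequences $\{r_m\}$ and $\{M_m\}$ that grow faster than \emph{any} multiple of $\varphi$ along $\{r_m\}$, feed them into Proposition \ref{prop:inequrr}, and extract a single $u$ that violates the hypothesis on $\varphi$.

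Concretely, I would fix $r_m=1/m$ (any sequence with $r_m\downarrow 0$ will do) and set $M_m:=m\,\varphi(r_m)$; since $\varphi$ takes values in $\RR^+$, $M_m\in\RR^+$. Proposition \ref{prop:inequrr} then produces a semistable radially increasing unbounded solution $u\in\XX$ of a problem of type \eqref{Eq:f} on $B_1\setminus\{0\}$, with $g\geq 0$, such that
\[
\abs{u''(r_m)}\geq M_m=m\,\varphi(r_m),\qquad \forall m\in\NN.
\]
By hypothesis applied to this particular $u$, there exist $C>0$ and $\epsilon\in(0,1]$ with $\abs{u''(r)}\leq C\varphi(r)$ for all $r\in(0,\epsilon]$. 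Choose $m\in\NN$ large enough so that $r_m<\epsilon$ and $m>C$. Then
\[
m\,\varphi(r_m)\leq \abs{u''(r_m)}\leq C\,\varphi(r_m)<m\,\varphi(r_m),
\]
which is the desired contradiction (note $\varphi(r_m)>0$ so one can divide through).

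There is no real obstacle here: the entire content lies in Proposition \ref{prop:inequrr}, which is already at our disposal. The one point worth being careful about is that the constant $C$ in the assumed bound is allowed to depend on $u$, so we must supply \emph{one} solution $u$ witnessing the failure for \emph{all} sufficiently small $r_m$ simultaneously; this is exactly what Proposition \ref{prop:inequrr} delivers, since it yields a single $u$ satisfying infinitely many pointwise lower bounds at once. With that in mind, the argument above is self-contained.
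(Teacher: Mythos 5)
Your argument is correct and is exactly the intended derivation: the paper itself only cites Proposition \ref{prop:inequrr} together with the corresponding results of \cite{NaSa19}, where the same contradiction-by-diagonalization (choosing $M_m=m\,\varphi(r_m)$ and letting $m\to\infty$) is carried out. The one point worth noting is that the statement of Proposition \ref{prop:inequrr} does not explicitly record that the nonlinearity satisfies $g\geq 0$, but the construction behind it (Theorem \ref{thm:familysemistable}) does produce such a $g$, so your invocation is legitimate.
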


\begin{proposition}\label{prop:inequrrr}
Let $\{r_m\}\subset(0, 1]$, $\{M_m\}\subset\RR^{+}$  be two sequences with $r_m\downarrow0$. Then, for $n\geq 2k+8$, there exists a $u\in\XX$, which is a semistable radially increasing unbounded solution of a problem of type \eqref{Eq:f} on $B_1\setminus\{0\}$ with $g'\leq 0$, satisfying

\begin{equation*}\label{ineq:famsemurrr}
\abs{u'''(r_m)}\geq M_m,\; \forall m\in\NN.
\end{equation*}
\end{proposition}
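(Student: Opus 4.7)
The plan is to invoke Theorem \ref{thm:familysemistable}, which already produces semistable radially increasing unbounded $\XX$-solutions of problems of type \eqref{Eq:f} parameterized by a nonnegative $h\in(C^1\cap L^1)(0,1]$. The task reduces to choosing $h$ so that the resulting $g$ additionally satisfies $g'\leq 0$, yet $|u'''(r_m)|\geq M_m$ for every $m$. Without loss of generality, I assume the $r_m$ are strictly decreasing.

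First I would differentiate $u'(r)=r^{\delta-1}(1+H(r))^{1/(k+1)}$ (with $\delta:=\delta_{n,k}$ and $H(r):=\int_0^r h$) three times in $r$ and arrange the result in the form
\[
u'''(r) \;=\; \frac{r^{\delta-1}}{k+1}\bigl(1+H(r)\bigr)^{-k/(k+1)}\,h'(r) \;+\; R\bigl(r,H(r),h(r)\bigr),
\]
where $R$ is a smooth function of its three arguments that does not involve $h'$. Thus, to force $|u'''(r_m)|$ significantly past the ambient singularity $r_m^{\delta-3}$ carried by $R$ (which already blows up since $\delta\leq 0$), I must supply a large $|h'(r_m)|$. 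In parallel, from the formula for $S_k(D^2u)/c_{n,k}$ in the proof of Theorem \ref{thm:familysemistable}, differentiating in $r$ produces a term
\[
\frac{k}{k+1}\,r^{k(\delta-2)+1}\bigl(1+H\bigr)^{-1/(k+1)}\,h'(r)
\]
plus terms depending on $h,H$ only. Since $u$ is strictly increasing, $g'\leq 0$ is equivalent to $r\mapsto S_k(D^2u)(r)$ being nonincreasing; in the pointwise inequality to enforce, $h'$ enters exclusively through this term. The crucial observation is that a \emph{negative} $h'$ assists, rather than opposes, the monotonicity of $S_k$.

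Guided by this, I would take $h=\sum_m h_m$, a sum of pairwise disjointly supported $C^1$ bumps. Each $h_m$ is supported in $[r_m-\delta_m,\,r_m+\varepsilon_m]$, rises gently from $0$ to a peak height $c_m$ on $[r_m-\delta_m,\,r_m]$, and then falls steeply from $c_m$ back to $0$ on $[r_m,\,r_m+\varepsilon_m]$, with $\delta_m\gg\varepsilon_m$. Then $h_m'>0$ on the rising half, with slope of order $c_m/\delta_m$ (small), and $h_m'(r_m)=-c_m/\varepsilon_m$ very negative. Choosing $\delta_m$ so that $c_m/\delta_m$ stays under a fixed small constant multiple of $r_m^{-2}$ prevents the rising part from destroying monotonicity of $S_k$, while taking $\varepsilon_m$ as small as needed yields $c_m/\varepsilon_m\geq (k+1)\,M_m\,r_m^{1-\delta}$, which forces $|u'''(r_m)|\geq M_m$ via the $h'$-term in the formula above. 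Disjointness of supports and $h\in(C^1\cap L^1)(0,1]$ reduce to $\sum c_m(\delta_m+\varepsilon_m)<\infty$, easily arranged.

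The main obstacle will be verifying pointwise on all of $(0,1]$ that $\frac{d}{dr}S_k(D^2u)\leq 0$, and not merely on the falling half of each bump where the sign of $h'$ helps. On each rising half one must dominate both the positive $h'$-contribution and the $h,H$-contributions to this derivative by the always-negative leading term $A\,k(\delta-2)\,r^{k(\delta-2)-1}$ coming from the $h\equiv 0$ skeleton of Theorem \ref{thm:familysemistable}; this follows routinely from the two-parameter smallness $c_m\ll 1$ and $c_m/\delta_m\ll r_m^{-2}$ that I have built into the construction. Once pointwise monotonicity of $S_k$ is checked, Theorem \ref{thm:familysemistable} supplies $u\in\XX$, $g\in C^1$ with $g\geq 0$, and the semistability of $u$, while the construction itself provides $g'\leq 0$ and the pointwise lower bound $|u'''(r_m)|\geq M_m$, completing the proof.
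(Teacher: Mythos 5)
Your strategy is essentially correct, and it is genuinely more self-contained than the paper's: the paper disposes of this proposition in one line by combining Theorem \ref{thm:familysemistable} and Corollary \ref{coro:esential} with the explicit constructions already carried out in \cite{NaSa19} for problem \eqref{Eq:P}, whereas you rebuild the construction from scratch inside the family of Theorem \ref{thm:familysemistable}. Your two structural observations are the right ones and check out: writing $H(r)=\int_0^r h$, the only occurrence of $h'$ in $u'''$ has coefficient $\frac{1}{k+1}r^{\delta-1}(1+H)^{-k/(k+1)}>0$, and the only occurrence of $h'$ in $\frac{d}{dr}\bigl(c_{n,k}^{-1}S_k(D^2u)\bigr)$ has coefficient $\frac{k}{k+1}r^{k(\delta-2)+1}(1+H)^{-1/(k+1)}>0$; since $u'>0$, the condition $g'\le 0$ is indeed equivalent to $r\mapsto S_k(D^2u)$ being nonincreasing, so a steep negative $h'$ simultaneously forces $|u'''(r_m)|$ large and \emph{helps} $g'\le 0$. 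The smallness budget $c_m/\delta_m\lesssim r_m^{-2}$ on the rising halves, measured against the always-negative leading term of order $r^{k(\delta-2)-1}$, is also the correct comparison, and semistability comes for free from Theorem \ref{thm:familysemistable} for any admissible $h$.

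Two points need repair before the argument closes. First, as literally described your bump $h_m$ is $C^1$ and attains its maximum at $r_m$, which forces $h_m'(r_m)=0$ and contradicts the claimed $h_m'(r_m)=-c_m/\varepsilon_m$; you must place the peak at some $\rho_m\in(r_m-\delta_m,r_m)$ and let the steep descent straddle $r_m$, so that $r_m$ sits inside the falling portion where $h'\approx-c_m/\varepsilon_m$. Second, the threshold $c_m/\varepsilon_m\ge(k+1)M_m r_m^{1-\delta}$ only beats $M_m$: you must also beat the remainder $R(r_m,H(r_m),h(r_m))$ and account for the factor $(1+H)^{-k/(k+1)}$. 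This is harmless --- fix in advance $\sum_m c_m(\delta_m+\varepsilon_m)\le 1$ and $c_m\le 1$, so that $|R(r_m,\cdot,\cdot)|$ is bounded by a quantity depending on $r_m$, $n$, $k$ only, and then enlarge the required lower bound on $c_m/\varepsilon_m$ accordingly --- but it has to be stated, since otherwise the inequality $|u'''(r_m)|\ge M_m$ does not follow from the $h'$-term alone. With these two adjustments the proof is complete.
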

\begin{corollary}\label{cor:inequrrr}
Let $n\geq 2k+8$. Then there is no function $\varphi:(0, 1]\to\RR^{+}$ having the following property: for every $u\in\XX$ semistable radially
increasing solution of a problem of type \eqref{Eq:f} on $B_1\setminus\{0\}$ with $g'\leq 0$, there exist $C > 0$ and $\epsilon\in(0, 1]$ such that $\abs{u'''(r)}\leq C\varphi(r)$ for every $r\in(0,\epsilon]$.
\end{corollary}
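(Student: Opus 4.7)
The plan is to derive this corollary from Proposition \ref{prop:inequrrr} by a straightforward contradiction argument. The proposition provides, for any prescribed sequences $r_m \downarrow 0$ and $M_m > 0$, a semistable radially increasing unbounded solution $u \in \XX$ of a problem of type \eqref{Eq:f} on $B_1\setminus\{0\}$ with $g'\le 0$ whose third derivative beats $M_m$ at $r_m$. The corollary says no single majorant $\varphi$ can bound $|u'''|$ uniformly over this whole class; this is exactly what one gets by letting $M_m$ outpace any prospective $\varphi(r_m)$.

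I would argue as follows. Suppose for contradiction that some $\varphi:(0,1]\to\RR^{+}$ has the stated property. Fix the sequence $r_m := 1/m$, so that $r_m \downarrow 0$ and $r_m \in (0,1]$, and define $M_m := m\,\varphi(r_m) > 0$. By Proposition \ref{prop:inequrrr} applied to $\{r_m\}$ and $\{M_m\}$, there exists a function $u \in \XX$ that is a semistable radially increasing unbounded solution of a problem of type \eqref{Eq:f} on $B_1\setminus\{0\}$ with $g' \le 0$ and with
\[
|u'''(r_m)| \ge M_m = m\,\varphi(r_m),\qquad \forall m \in \NN.
\]

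By the hypothesized property of $\varphi$ applied to this particular $u$, there exist constants $C>0$ and $\epsilon \in (0,1]$ such that $|u'''(r)| \le C\,\varphi(r)$ for every $r \in (0,\epsilon]$. Choose $m_0 \in \NN$ with $1/m_0 < \epsilon$; then for every $m \ge m_0$ we have $r_m \in (0,\epsilon]$, and combining the two bounds gives
\[
m\,\varphi(r_m) \le |u'''(r_m)| \le C\,\varphi(r_m).
\]
Dividing by $\varphi(r_m) > 0$ yields $m \le C$ for all $m \ge m_0$, which is absurd. This contradiction proves the corollary.

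The only delicate point is really upstream in Proposition \ref{prop:inequrrr}: one needs a construction (in the spirit of Theorem \ref{thm:familysemistable}) that produces, within the semistable class with $g'\le 0$, solutions whose $u'''$ is arbitrarily large at arbitrarily small radii. Once that is available, the corollary itself is essentially immediate, and there is no real obstacle beyond choosing $M_m$ large enough relative to $\varphi(r_m)$, as above.
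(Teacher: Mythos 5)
Your proof is correct and follows essentially the same route as the paper, which simply deduces Corollary \ref{cor:inequrrr} from Proposition \ref{prop:inequrrr} (citing the analogous argument in \cite{NaSa19}); your contradiction argument with $r_m=1/m$ and $M_m=m\,\varphi(r_m)$ is exactly that deduction made explicit. You also correctly identify that the real content lies upstream in Proposition \ref{prop:inequrrr}, not in the corollary itself.
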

\begin{proof}
The proofs of propositions \ref{prop:inequrr} and \ref{prop:inequrrr} and of corollaries \ref{cor:inequrr} and \ref{cor:inequrrr} follow from Theorem \ref{thm:familysemistable}, Corollary \ref{coro:esential} and \cite[propositions 4.3 and 4.4, corollaries 4.4 and 4.5]{NaSa19}.
\end{proof}

\section{Additional comments}
For the convenience of the reader, and in order to make explicit the constant $c_{n,k}$ which intervenes in the radial form of the $k$-Hessians, we give here a detailed proof of \eqref{Radial:Hess}. Consider a radial function $u$ on the unit ball $B_1$ of $\RR^n$, that is, $u(x)=v(r)$ where $r=\abs{x}=\sqrt{x_1^2+\ldots+x_n^2}$.
We compute the Hessian of $v$, rotating $B_1$ so that $x_1 =r$:
\begin{equation}\label{eq:hessianv}
D^2v=
\begin{pmatrix}
v''&0&\cdots&0\\
0&\frac{v'}{r}&\ddots&\vdots\\
\vdots&\ddots&\ddots&0\\
0&\cdots&0&\frac{v'}{r}
\end{pmatrix}.
\end{equation}

Since the Hessian operators are invariant under rotations of coordinates, we have $S_k\left(D^2u\right)=S_k\left(D^2v\right)$. Then we compute the minors of order $k$ of $D^2v$ using an ingenious argument from \cite{Warren16}. Let 
\[
I=\left\lbrace \alpha\subset\{1,...,n\}:\; \abs{\alpha}=k\right\rbrace ,
\]
and let $A=\{\alpha\in I:1\in\alpha\}$ and $B=\{\alpha\in I:n\in\alpha\}$.

We express $I$ as a disjoint union

\begin{equation}\label{eq:Iradform}
I=(A\cap B)\cup(B\setminus A)\cup(I\setminus B).
\end{equation}

Now define
\[
S_{k}^{(\alpha)}=\mbox{det}
\begin{pmatrix}
k\times k\mbox{ matrix of \eqref{eq:hessianv}}
\\
\mbox{with row and columns}
\\
\mbox{chosen from $\alpha$}
\end{pmatrix}.
\]

For $\alpha\in I$, we need to consider three cases:
\begin{enumerate}[$\mbox{Case }1:$]
	\item If $\alpha\in (A\cap B)$,
	\[
	S_{k}^{(\alpha)}=v''\left(\frac{v'}{r}\right)^{k-1} \mbox{ and }\abs{A\cap B}=\binom{n-2}{k-2}.
	\]
	\item If $\alpha\in (B\setminus A)$,
	\[
	S_{k}^{(\alpha)}=\left(\frac{v'}{r}\right)^{k}\mbox{ and }\abs{B\setminus A}=\binom{n-1}{k}.
	\]
	\item If $\alpha\in (I\setminus B)$,
	\[
	S_{k}^{(\alpha)}=v''\left(\frac{v'}{r}\right)^{k-1}\mbox{ and }\abs{I\setminus B}=\binom{n-2}{k-1}.
	\]
\end{enumerate}

From \eqref{eq:Iradform} and the binomial identities $\binom{n-2}{k-2}+\binom{n-2}{k-1}=\binom{n-1}{k-1}=\frac{k}{n}\binom{n}{k}$ and $\binom{n-1}{k}=\frac{n-k}{n}\binom{n}{k}$, we have
\[
\begin{split}
S_k(D^2v)&=\sum_{\alpha\in I }{S_{k}^{(\alpha)}}=\binom{n-2}{k-2}v''\left(\frac{v'}{r}\right)^{k-1}+\binom{n-1}{k}\left(\frac{v'}{r}\right)^{k}+\binom{n-2}{k-1}v''\left(\frac{v'}{r}\right)^{k-1}\\
&=c_{n,k}\left(\frac{v'}{r}\right)^{k-1}\left((n-k)\left(\frac{v'}{r}\right)+kv''\right)=c_{n,k}r^{1-n}\left(r^{n-k}(v')^k\right)',
\end{split}
\]
where the last equality shows that we must have $c_{n,k}=\binom{n}{k}/n$. This completes the proof of the formula in \eqref{Radial:Hess}.

\section*{Acknowledgements}
M. Navarro was supported by XUNTA de Galicia under Grant Axudas \'{a} etapa de formaci\'{o}n posdoutoral 2017 and partially supported by AEI of Spain under Grant MTM2016-75140-P and co-financed by European Community fund FEDER and XUNTA de Galicia under grants GRC2015-004 and R2016/022. 

\bibliographystyle{plain}
\bibliography{k-Hessianbib}
\end{document}